\newtheorem{thm}{Theorem}
\newtheorem{prop}[thm]{Proposition}
\newtheorem{lem}[thm]{Lemma}
\newtheorem{cor}[thm]{Corollary}
\theoremstyle{definition}
\newtheorem{defn}[thm]{Definition}
\newtheorem{ex}[thm]{Example}
\newtheorem{rem}[thm]{Remark}
\renewcommand{\[}{\begin{equation*}}
\renewcommand{\]}{\end{equation*}}
\def\frakX{\mathfrak{X}}
\def\frakL{\mathfrak{L}}
\def\scrF{\mathcal{F}}
\def\scrK{\mathcal{K}}
\def\T{\mathrm{T}}
\let\oldc=\c
\def\c{\mathrm{c}}
\def\G{\mathbb{G}}
\def\Con{\mathfrak{C}\mathrm{on}}
\DeclareMathOperator\Lie{Lie}
\DeclareMathOperator\id{id}
\DeclareMathOperator\End{End}
\begin{document}
\title[Extremal K-contact metrics ]{Extremal K-contact metrics}
\author{Mehdi Lejmi}
\address{D\'epartement de Math\'ematiques, Universit\'e Libre de Bruxelles CP218, Boulevard du Triomphe, Bruxelles 1050, Belgique. }
\email{mlejmi@ulb.ac.be}
\author{Markus Upmeier}
\address{D\'epartement de Math\'ematiques, Universit\'e Libre de Bruxelles CP218, Boulevard du Triomphe, Bruxelles 1050, Belgique. }
\email{mupmeier@ulb.ac.be}

\begin{abstract}
Extending a result of He to the non-integrable case of $K$-contact manifolds, it is shown that transverse Hermitian scalar curvature may be interpreted as a moment map for
the strict contactomorphism group. As a consequence, we may generalize the Sasaki-Futaki invariant to $K$-contact geometry and establish a number of elementary properties.

Moreover, we prove that in dimension $5$ certain deformation-theoretic results can be established also under weaker integrability conditions by exploiting the relationship between $J$-anti-invariant and self-dual $2$-forms.
\end{abstract}

\maketitle
\section{Introduction}


On a symplectic manifold $(M,\omega)$, consider the space $\mathcal{AC}(\omega)$ of all $\omega$-compatible almost-complex structures $J$ and the subspace $\mathcal{C}(\omega)$ of integrable ones.
A crucial observation due to Fujiki~\cite{fuj} is that $\mathcal{C}(\omega)$ may be viewed as an infinite-dimensional K\"ahler manifold and that the natural action of the group of Hamiltonian symplectomorphisms admits a moment map, associating to a complex structure $J$ the scalar curvature of the metric $g=\omega(\cdot, J\cdot)$. An important generalization of this result to $\mathcal{AC}(\omega)$, the non-integrable case, was established by Donaldson~\cite{don}.

The critical points of the square-norm of this moment map give canonical representatives of almost-complex structures $J$ (corresponding to metrics) called {\it{extremal almost-K\"ahler metrics}}~\cite{apo-dra,lej-1}. These metrics are a natural extension of Calabi's extremal K\"ahler metrics~\cite{cal-1,cal-2}.

Recently, He~\cite{he} introduced a similar moment map picture to Sasakian geometry, which may be viewed as an odd-dimensional counterpart of K\"ahler geometry. The first goal of this paper is to generalize in Theorem~\ref{moment_map} the result of He to the non-integrable case (so-called \emph{$K$-contact structures}), as conjectured in~\cite[Remark 4.3]{he}. The moment map now takes a $K$-contact structure to its transverse Hermitian scalar curvature. 

We define \emph{extremal $K$-contact metrics} again as critical points. Theorem~\ref{moment_map} has a number of consequences (such as a $K$-contact \emph{Futaki invariant}~\cite{bo-gal-sim,bo-gal-sim-1,fut}), which we investigate in Sections~\ref{sec4},~\ref{sec5}. These metrics appear as natural extensions of {\it{extremal Sasakian metrics}}, introduced by Boyer--Galicki--Simanca~\cite{bo-gal-sim,bo-gal-sim-1} and motivated by the examples of {\it{irregular}} Sasaki-Einstein metrics (see for instance~\cite{gau-mar-spa-wal}).

In Sections~\ref{sec3},~\ref{deform_section} we consider the deformation-theoretic behaviour of extremal $K$-contact metrics, leading to the notion of a \emph{semi-Sasakian structure}. As opposed to the integrable case, our considerations are limited to dimension $5$, as we exploit the relationship between $J$-anti-invariant and self-dual $2$-forms. We also generalize the {\it{transverse $\partial\overline\partial$-Lemma}} \cite{kac} to the $K$-contact case.
\medskip

{\bf Acknowledgements: } The first author is very grateful to Christina T{\o}nnesen-Friedman and Charles Boyer for their suggestions on how to construct irregular extremal Sasakian metrics. Both authors are thankful to Joel Fine and Weiyong He for several useful discussions.

\section{Preliminaries}

\subsection{$K$-contact structures}

Let $(M,\eta)$ be a {\it{contact manifold}} of dimension $2n+1$, where $\eta$ is the \emph{contact $1$-form} satisfying $\eta\wedge(d\eta)^n\neq 0$ at every point of $M$. The Reeb vector field $\xi \in \frakX(M)$ for $\eta$ is uniquely determined by the requirements
\begin{equation*}
\eta(\xi)=1,\quad \iota_\xi\left(d\eta\right)=0.
 \end{equation*}
The corresponding distribution $\scrF_\xi = \mathbb{R}\xi \subset TM$ defines the \emph{characteristic foliation}.

Denote by $\Con(M,\eta)$ the strict contactomorphism group of all diffeomorphisms $f$ satisfying $f^*\eta=\eta$. Its Lie algebra are all vector fields $X$ with $\frakL_X \eta = 0$. For $M$ compact,
the \emph{contact Hamiltonian} of $X$ is the unique basic function $f \in C^\infty_{B}(M)$ satisfying $\eta(X)=f$, $d\eta(X,\cdot) = -df$. This gives an identification
\begin{equation}\label{contact-hamiltonian}
	\Lie\Con(M,\eta) \cong C^\infty_{B}(M),\qquad f\leftrightarrow X_f.
\end{equation}
We use it to transport the metric on $C^\infty_{B}(M)\subset L^2(M, dv_\eta)$ to $\Lie \Con(M,\eta)$.

\begin{defn}\label{def:K-contact}
A \emph{$K$-contact structure} $(\eta,\xi,\Phi)$ consists of a contact form $\eta$ on $M$  with Reeb field $\xi$ together with an endomorphism $\Phi:TM\rightarrow TM$ satisfying
\begin{equation*}
\quad\Phi^2=-\id_{TM}+\xi\otimes\eta,\qquad \frakL_\xi \Phi = 0,
\end{equation*}
We require also the following compatibility conditions with $\eta$:
\begin{equation*}
d\eta(\Phi X,\Phi Y)=d\eta(X,Y),\quad d\eta(Z,\Phi Z)>0\quad
\forall X,Y \in TM, Z\in \ker(\eta) \setminus \{0\}
\end{equation*}
\end{defn}
\begin{defn}
Fixing $\eta$, the set of $K$-contact structures $\Phi$ on $(M,\eta)$ is denoted $\scrK_\eta$.
\end{defn}

From Definition~\ref{def:K-contact} one may deduce
$\Phi(\xi)=0$, $\eta\circ\Phi=0$. Moreover, to any $K$-contact structure there belongs a metric $g=g_\Phi$ given by
\begin{equation}\label{metric-eta-Phi}
g_\Phi(X,Y)=d\eta(X,\Phi Y)+\eta(X)\eta(Y).
\end{equation}

The leaves of $\scrF_\xi$ are geodesics with respect to $g$ and the foliation is Riemannian (see \cite[Section 2]{bo-ga}). In particular, we have a \emph{transverse Levi-Civita connection} $D^\T$ on the \emph{normal bundle} $\nu=TM/\mathbb{R}\xi$. This is the unique metric, torsion-free connection on $\nu$ (\emph{i.e.}~$D^\T_X (\pi Y) - D^\T_Y (\pi X) = \pi[X,Y]$ for the projection $\pi\colon TM\rightarrow \nu$).

\begin{rem}
A {\it{Sasakian structure}} is a $K$-contact structure $(\eta,\xi,\Phi,g)$ satisfying the integrability condition $D^g_X\Phi=\xi\otimes X^{\flat_g}-X\otimes\eta\enskip (\forall X\in \frakX(M))$ for the Levi-Civita connection $D^g$ on $(M,g)$ and where $X^{\flat_g}=g(\cdot, X)$. It is well-known that this is equivalent to the almost-K\"ahler cone $\left(\mathbb{R}_{>0}\times M,dr^2+r^2g, d\left(\frac{r^2}{2}\eta\right)\right)$ being K\"ahler.
\end{rem}

\subsection{Basic Forms and Transverse Structure}\label{transverse_section}

Let $\scrF$ be a foliation on $M$ given by an integrable subbundle $T\scrF$ of $TM$. A $p$-form $\alpha$ on $M$ is called {\it{basic}} if
\[
	\iota_\xi \alpha = 0,\qquad
	\frakL_\xi \alpha = 0\qquad \forall \xi \in \Gamma(M,T\scrF).
\]
Let $\left(\Omega^*_B(M), d_B\right)$ denote the subcomplex of basic forms of the de Rham complex and let $C^\infty_B(M) = \Omega^0_B(M)$.
The \emph{basic cohomology} is $H_\scrF^*(M) = H^*(\Omega_B(M), d_B)$.

A {\it{transverse}} symplectic, almost-complex, or Riemannian structure is a corresponding structure on the normal bundle $\nu$ whose Lie derivative in direction of vectors tangent to the leaves  vanishes (see~\cite{tondeur}).
For example, a $K$-contact structure $(\eta,\xi,\Phi, g)$ gives a transverse almost complex structure $\Phi^\T=\Phi|_\nu\in \End(\nu)$, metric $g^\T = g_\Phi |_\nu$, and symplectic form $\omega^\T=d\eta|_\nu \in \Gamma(M,\Lambda^2 \nu^*)$ for $\scrF=\scrF_\xi$.
\begin{defn}
Fixing $\xi$ and $J\in \End(\nu)$, let $\scrK(\xi, J)$ be the space of all $K$-contact structures $(\eta, \xi, \Phi)$ with Reeb field $\xi$ and induced transverse structure $\Phi^\T = J$.
\end{defn}






We briefly describe a $K$-contact structure in local coordinates (see \cite{he} and also \cite{bo-ga,fu-on-wa} in the Sasakian case). We may pick contact Darboux coordinates \cite[Theorem~2.5.1]{geiges} which means the contact form may be written as
\[
	\eta = dx^0 + \sum_{i=1}^n x^{2i-1} dx^{2i},
	\qquad \xi = \partial/\partial x^0.
\]
Then $\omega^T = \sum_{i=1}^n dx^{2i-1}\wedge dx^{2i}$.
The subspace $\ker \eta$, which identifies via $X\mapsto X-\eta(X)X$ with the normal bundle $TM/\mathbb{R}\xi = \nu$, is spanned by
\[
e_{2i-1} = \partial / \partial x^{2i-1},\quad
e_{2i} = \partial / \partial x^{2i} - x^{2i-1} \partial/\partial x^0\quad
(1\leq i \leq n)
\]
Using $\ker(\eta) \bot \xi$, the metric \eqref{metric-eta-Phi} has $g_{ij} = g(e_i,e_j) = g(\partial/\partial x^i, \partial/\partial x^j)$ for $i,j\geq 1$  and $g_{00} = 1$. $\Phi$ is described by the basic functions $\Phi(e_i) = \Phi^j_i e_j$. We have
\begin{equation}\label{coord-expressions}
	\Phi_i^k \Phi_k^j = -\delta_i^j,\qquad
	g_{jk}\Phi_i^j = g_{ji}\Phi_k^j.
\end{equation}

\subsection{Hermitian curvature}
The {\it{\textup(transverse\textup) Hermitian connection}} $\bar\nabla^\T$ on $\nu$ may be defined using the transverse Levi-Civita connection $D^\T$ via
\[
	\bar\nabla^\T_XY=D^\T_XY-\frac{1}{2}\Phi^\T\left(D^\T_X\Phi^\T\right)Y.
\]
 (see \cite{gau-1,lib} and \cite[Sections 9.2, 9.3]{gau} for details on Hermitian connections.) This gives the unique connection on $\nu$ with $\bar\nabla^\T h = 0$, where $2h=g^\T-i\omega^\T$, and whose torsion is the transverse Nijenhuis tensor \eqref{transverse-niej}. Let $\bar{R}^\T$ be the curvature of $\bar\nabla^\T$ and
 \[
	\bar\rho^\T(X,Y)=-\Lambda_\omega(\Phi^\T\circ \bar{R}^\T_{X,Y}),
 \]
where $\Lambda_\omega$ denotes the adjoint of $\omega^\T\wedge -$ on basic forms, see \eqref{trace-operator}. The {\it{\textup(transverse\textup) Hermitian scalar curvature}} is defined via the \emph{Hermitian Ricci $2$-form} $\bar\rho^\T$ as
\[
\bar{s}^\T=2\Lambda_\omega(\bar\rho^\T).
\]
If $\eta$ is fixed we shall emphasize the dependence on $\Phi$ by writing $\bar{s}^\T_\Phi$.


%

%

%


\section{Basic cohomology of $K$-contact structures}\label{sec3}

Throughout this section, fix a compact $K$-contact manifold $(M,\eta,\xi,\Phi,g)$ of dimension $2n+1$ with transverse almost complex structure $J=\Phi^\mathrm{T}$. Our first goal is to describe in Theorem~\ref{space_of_deformation} the space $\mathcal{K}(\xi,J)$ when $2n+1=5$. This requires the development of some machinery of `almost K\"ahler geometry in the transverse,' for example the {\it{transverse $\partial\overline\partial$-Lemma}} (generalizing a result of El-Kacimi-Alaoui~\cite{kac} to the $K$-contact case).

\subsection{Transverse Almost K\"ahler Geometry}

The endomorphism $\Phi$ induces an action on basic $p$-forms via
\[
	(\Phi\alpha)(X_1,\cdots,X_p)=(-1)^p\alpha(\Phi X_1,\cdots,\Phi X_p).
\]
For instance $\Phi\eta=0$. This action preserves basic forms $\alpha \in \Omega^p_B(M)$ since
\begin{eqnarray*}
\iota_\xi(\Phi\alpha)(X_2,\cdots,X_p)&=&(-1)^p\alpha(\Phi\xi,\Phi X_2,\cdots,\Phi X_p)=0,\\ 
\mathfrak{L}_\xi(\Phi\alpha)&=&(\mathfrak{L}_\xi\Phi)\alpha=0.
\end{eqnarray*}
On $p$-forms we have $\Phi^2|_{\Omega^p_B} = (-1)^p \id$. This action coincides for all $\Phi \in \scrK(\xi,J)$ and accordingly we may speak just of $J$-invariant basic forms.

The {\it{twisted exterior derivative}} on $p$-forms is $d^\c=(-1)^p\Phi \,d\,\Phi$ and  preserves basic forms. We write $d^\c_B=d^\c|_{\Omega_B^p}$.
\begin{rem}\label{commute_ddc}
For a basic function $f$ we have $$[(d^\c_Bd_B+d_Bd^\c_B)f] (X,Y)=d^\c_Bf\left(N_{\Phi}(X,Y)\right),$$
using the \emph{transverse Nijenhuis tensor}
\begin{equation}\label{transverse-niej}
N_\Phi(X,Y)=[\Phi X,\Phi Y]+\Phi^2[X,Y]-\Phi[\Phi X,Y]-\Phi[X,\Phi Y].
\end{equation}
Furthermore, when the $K$-contact structure $(\eta,\xi,\Phi,g)$ is Sasakian, $N_\Phi=-2\xi\otimes d\eta$ (see for instance~\cite[p. 204]{bo-ga}) and thus $d^\c_Bd_B+d_Bd^\c_B=0$.
\end{rem}

{\it{The tranverse Hodge star operator}} $\bar{\ast}$ (see~\cite[p. 215]{bo-ga} or~\cite{tondeur}) is defined in terms of the usual Riemannian Hodge operator $\ast=\ast_g$ by setting
\[
\bar{\ast}\alpha=\ast(\eta\wedge\alpha)=(-1)^p\iota_\xi(\ast\alpha),\qquad \alpha\in\Omega_B^p.
\]
In particular, it maps basic forms to themselves and on $p$-forms we have $$(\bar{\ast})^2|_{\Omega^p_B}=(-1)^p.$$

As in almost K\"ahler geometry, define also \emph{adjoint differentials} by $\delta_B=-\mathbin{\bar{\ast}}d_B \mathbin{\bar{\ast}}$ and $\delta^\c_B=-\mathbin{\bar{\ast}}d^\c_B\mathbin{\bar{\ast}}$. The {\it{basic \textup(twisted\textup) Laplacian}} is then given by 
\begin{equation}\label{def:laplacians}
\Delta_B=d_B\delta_B+\delta_Bd_B,\qquad \Delta_B^\c=d_B^\c\delta_B^\c+\delta_B^\c d_B^\c.
\end{equation}

One can easily check the following (similarly to the almost-K\"ahler case, see~\cite{gau}):
\begin{lem}\label{adjoint_expr}
Let $\nabla$ be a torsion free connection on $M$ with $\nabla (d\eta)=0$. For a local, positively oriented, $g$-orthonormal basis $\xi,e_1,\ldots, e_{2n}$ of $TM$ we have
\[\delta^\c_B\alpha=-\sum_{i=1}^{2n}\iota_{\Phi e_i}\left(\nabla_{e_i}\alpha\right),\qquad \alpha \in \Omega^p_B.\]
\textup(such a connection $\nabla$ can be constructed similarly to the Levi-Civita connection.\textup)
\end{lem}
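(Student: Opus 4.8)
The plan is to reduce the transverse identity to the corresponding pointwise formula on $M$ for the ordinary exterior differentials and then specialize to basic forms. First I would recall the standard almost-Kähler fact that if $\nabla$ is any torsion-free connection on $M$ with $\nabla(d\eta)=0$ (which automatically gives $\nabla\Phi$ and $\nabla(\eta\wedge\cdot)$ suitable properties, since $\Phi$ is recovered from $g$ and $d\eta$ and the metric is $\nabla$-parallel once $\eta\otimes\eta$ is), then for \emph{any} $p$-form the codifferential can be written as $\delta\alpha = -\sum_{a}\iota_{e_a}(\nabla_{e_a}\alpha)$ where the sum runs over the full orthonormal frame $\xi, e_1,\dots,e_{2n}$; this is the usual Weitzenböck-type expression for $\delta$ in terms of a torsion-free metric connection, valid because the torsion-free and metric conditions let one replace Levi-Civita by $\nabla$ in the trace formula. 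Applying the twist $d^\c_B = (-1)^p\Phi\, d_B\,\Phi$ and its adjoint $\delta^\c_B = -\bar\ast\, d^\c_B\,\bar\ast$, one expects $\delta^\c_B = \pm \Phi\,\delta_B\,\Phi$ on basic $p$-forms, after carefully tracking the signs coming from $\Phi^2|_{\Omega^p_B}=(-1)^p$ and $(\bar\ast)^2|_{\Omega^p_B}=(-1)^p$ and the interplay $\bar\ast\,\Phi = \pm\,\Phi\,\bar\ast$ (which follows from $\Phi$ being $g^\T$-compatible and hence commuting with $\bar\ast$ up to sign on each degree).

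Granting $\delta^\c_B\alpha = \Phi\,\delta_B(\Phi\alpha)$ with the correct sign, I would substitute the frame formula for $\delta_B$: since $\Phi\alpha$ is again basic and $\iota_\xi$ annihilates basic forms, $\delta_B(\Phi\alpha) = -\sum_{i=1}^{2n}\iota_{e_i}\bigl(\nabla_{e_i}(\Phi\alpha)\bigr)$ — the $\xi$-term drops out because $\iota_\xi(\nabla_\xi(\Phi\alpha))=0$ using $\nabla\xi$ parallel-ish and $\Phi\alpha$ basic (more precisely $\iota_\xi$ of a basic form is zero and $\nabla_\xi$ preserves basicness since $\mathfrak L_\xi$ does and $\nabla$ differs from $\mathfrak L_\xi$ by $\nabla\xi$, which is controlled). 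Then I would move the outer $\Phi$ inside using that $\Phi$ is $\nabla$-parallel (a consequence of $\nabla(d\eta)=0$ and $\nabla g=0$, as $\Phi$ is the composite $g^{-1}\circ d\eta$ up to the $\xi$-direction), so $\nabla_{e_i}(\Phi\alpha) = \Phi(\nabla_{e_i}\alpha)$ on basic forms, and contract: $\iota_{e_i}\circ\Phi$ acting then gets converted to $\iota_{\Phi e_i}$ via the adjunction $\iota_{e_i}(\Phi\beta) = -(\Phi\,\iota_{\Phi e_i}\beta)$ on basic forms — here one uses the definition $(\Phi\beta)(X_1,\dots,X_{p-1}) = (-1)^{p-1}\beta(\Phi X_1,\dots,\Phi X_{p-1})$ together with $\Phi^2 = -\id$ on $\ker\eta$, so that inserting $e_i$ and pulling $\Phi$ out produces exactly one factor of $\Phi e_i$ with a compensating sign. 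Summing over $i$ and cancelling the remaining $\Phi^2 = (-1)^{p}\id$ factors against the sign accumulated earlier should leave precisely $-\sum_{i=1}^{2n}\iota_{\Phi e_i}(\nabla_{e_i}\alpha)$.

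The parenthetical remark — that such a connection $\nabla$ exists — I would dispatch by the standard construction: start from the Levi-Civita connection $D^g$ and add a correction tensor to kill $\nabla(d\eta)$, exactly as one produces the canonical Hermitian-type connections; concretely, pick any torsion-free connection and average/project so that $d\eta$ becomes parallel while torsion-freeness is retained (this is possible because $d\eta$ is closed, so the obstruction to a parallel-making correction lies in an exact class and can be absorbed). Alternatively one observes that on the transverse level the transverse Levi-Civita connection $D^\T$ already satisfies $D^\T\omega^\T = 0$, and any basic torsion-free lift to $M$ with $\nabla(d\eta)=0$ suffices for the frame computation since only the action on basic forms matters.

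The main obstacle I anticipate is bookkeeping the signs: there are three independent sign sources — the degree-dependent sign in the definition of $d^\c_B$ (the $(-1)^p$), the sign $(\bar\ast)^2 = (-1)^p$, and the sign in $\iota_X\circ\Phi$ versus $\iota_{\Phi X}$ — and they must conspire to give the clean sign-free answer stated. I would handle this by first verifying the formula on $0$-forms and $1$-forms directly (where $\delta^\c_B f = 0$ trivially and on $1$-forms the contraction identities are transparent), using these base cases to pin down the overall sign convention, and only then asserting the general-degree formula, whose proof is the same manipulation repeated with the degree kept as a formal parameter. A secondary, more technical point is justifying that $\nabla$ preserves the space of basic forms and that the $\xi$-component of the frame sum genuinely vanishes on basic forms; this requires knowing $\nabla_\xi$ maps basic forms to basic forms, which follows from $\mathfrak L_\xi = \nabla_\xi + (\nabla\xi)\cdot$ together with $\nabla\xi$ being a basic endomorphism (itself a consequence of $\nabla(d\eta)=0$ and $\iota_\xi d\eta = 0$).
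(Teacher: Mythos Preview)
Your approach has a real gap: you repeatedly assume that $\nabla g = 0$ (and hence $\nabla\Phi = 0$), but the lemma only hypothesizes that $\nabla$ is torsion-free with $\nabla(d\eta)=0$. These three conditions --- torsion-free, $\nabla g=0$, and $\nabla(d\eta)=0$ --- are mutually compatible only when the transverse almost-complex structure is integrable, i.e.\ in the Sasakian case; in the genuinely $K$-contact setting no such connection exists, so the argument cannot proceed via the metric trace formula $\delta_B\beta=-\sum_i\iota_{e_i}\nabla_{e_i}\beta$ followed by pushing $\Phi$ through $\nabla$. Both of those steps need $\nabla g=0$.

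The fix, and what the reference to the almost-K\"ahler case (Gauduchon) is pointing at, is to bypass $\delta_B$ entirely and recognize $\delta^\c_B$ as the \emph{symplectic} codifferential for the transverse symplectic form $\omega^\T=d\eta$. Concretely, on basic forms the transverse symplectic Hodge star $\bar\ast_\omega$ is related to $\bar\ast$ by composition with $\Phi$, and one checks that $-\bar\ast\,d^\c_B\,\bar\ast$ agrees (up to the correct sign) with $\pm\bar\ast_\omega\,d_B\,\bar\ast_\omega$. The hypothesis $\nabla(d\eta)=0$ (together with the consequent $\nabla\bigl((d\eta)^n\bigr)=0$) makes $\bar\ast_\omega$ parallel for $\nabla$, so from $d_B=\sum_i e_i^{\flat}\wedge\nabla_{e_i}$ one gets
\[
\delta^\c_B\alpha=\pm\sum_i \bar\ast_\omega\!\bigl(e_i^{\flat}\wedge\bar\ast_\omega\nabla_{e_i}\alpha\bigr)=\pm\sum_i \iota_{(e_i)^{\sharp_\omega}}\nabla_{e_i}\alpha,
\]
where $(e_i)^{\sharp_\omega}=\pm\Phi e_i$ is the symplectic (not metric) dual. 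This yields the stated formula without ever invoking $\nabla g=0$ or $\nabla\Phi=0$. Your sign-bookkeeping strategy and the handling of the $\xi$-term are fine; it is only the route through $\delta_B$ and $\nabla\Phi=0$ that fails.
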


We introduce also the adjoint operators $L\colon \Omega^p_B \rightarrow \Omega^{p+2}_B$ and $\Lambda\colon \Omega^{p+2}_B \rightarrow \Omega^{p}_B$ by
\begin{equation}\label{trace-operator}
	L(\alpha)=\alpha\wedge d\eta,\qquad \Lambda = -\mathbin{\bar\ast}L\mathbin{\bar\ast}.
\end{equation}

\begin{prop}\label{lem:kaehler-identities}
We have the K\"ahler identities on basic forms:
\begin{equation}\label{basic-identities}
[L,\delta^\c_B]=-d_B,\quad [L,\delta_B]=d^\c_B,\quad [\Lambda,d^\c_B]=\delta_B,\quad [\Lambda,d_B]=-\delta^\c_B
\end{equation}
\end{prop}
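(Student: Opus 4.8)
The plan is to reduce the transverse K\"ahler identities to the local pointwise identities of ordinary almost-K\"ahler geometry by working with the auxiliary connection $\nabla$ from Lemma~\ref{adjoint_expr}. First I would fix a point $p\in M$ and choose a local $g$-orthonormal frame $\xi,e_1,\dots,e_{2n}$ of $TM$ which is, in addition, $\nabla$-geodesic at $p$ (so $\nabla e_i$ vanishes at $p$), using that $\nabla$ is torsion-free. Since $\nabla(d\eta)=0$ one may arrange the frame $e_1,\dots,e_{2n}$ of $\ker\eta\cong\nu$ to be compatible with $\Phi^\T$ in the usual way, i.e.\ $\Phi e_{2i-1}=e_{2i}$; then $d\eta=\sum_i e^{2i-1}\wedge e^{2i}$ at $p$, where $e^j$ is the dual basic coframe.

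Next I would express all four operators in \eqref{basic-identities} through $\nabla$ at the point $p$. For $d_B$ on a basic form $\alpha$, torsion-freeness gives $d_B\alpha=\sum_i e^i\wedge\nabla_{e_i}\alpha$ (the $\xi$-contributions drop out because $\alpha$ is basic and $\nabla(d\eta)=0$ forces $\nabla\eta$ to behave correctly; one checks $\iota_\xi$ and $\frakL_\xi$ of the right-hand side vanish). Similarly $d^\c_B\alpha = -\sum_i (\Phi e_i)^\flat\wedge \nabla_{e_i}\alpha = -\sum_i e^i\wedge\nabla_{\Phi e_i}\alpha$, which follows from $d^\c=(-1)^p\Phi d\Phi$ together with $\nabla\Phi^\T$ being the only obstruction — but in these formulas the $\nabla\Phi$-terms are exactly what is needed so that $d^\c_B$ has the stated form (this is where the Hermitian, not transverse-Levi-Civita, nature enters, paralleling \cite{gau}). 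The adjoints $\delta_B=-\sum_i\iota_{e_i}\nabla_{e_i}$ and $\delta^\c_B=-\sum_i\iota_{\Phi e_i}\nabla_{e_i}$ are then immediate from Lemma~\ref{adjoint_expr} and the analogous computation for $\delta_B$. Finally $L=d\eta\wedge-$ and $\Lambda=\iota$ against the transverse symplectic form are $\nabla$-parallel and purely algebraic.

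With all operators written as $\nabla$-covariant versions of constant-coefficient operators on $(\mathbb{R}^{2n},\omega_{\mathrm{std}})$, each commutator in \eqref{basic-identities} becomes, at $p$, the corresponding commutator of the \emph{linear-algebraic symbols} $e^i\wedge-$, $\iota_{e_i}$, $(\Phi e_i)^\flat\wedge-$, $\iota_{\Phi e_i}$, $d\eta\wedge-$, $\Lambda$, contracted against $\nabla$ — because the $\nabla$-derivative terms that could spoil the identity cancel thanks to $\nabla(d\eta)=0$ and the vanishing of $\nabla e_i$ at $p$. These algebraic commutator identities are exactly the standard symplectic-linear-algebra relations underlying the K\"ahler identities (e.g.\ $[\Lambda, e^i\wedge-]=\pm\iota_{\Phi e_i}$ and its companions), valid on any symplectic vector space with compatible $J$ and hence needing no integrability. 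Assembling them yields \eqref{basic-identities} at $p$, and since $p$ was arbitrary the identities hold on all of $M$.

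\textbf{Main obstacle.} The delicate point is not the linear algebra but verifying that the chosen operators are really the $\nabla$-versions written above \emph{with the correct torsion/Hermitian corrections} — that is, confirming $d^\c_B$ and $\delta^\c_B$ genuinely equal $-\sum_i (\Phi e_i)^\flat\wedge\nabla_{e_i}(\cdot)$ and $-\sum_i\iota_{\Phi e_i}\nabla_{e_i}(\cdot)$ once all $\nabla\Phi^\T$-terms are accounted for. Lemma~\ref{adjoint_expr} already hands us $\delta^\c_B$ in this form, and a parallel argument (transposing via $\bar\ast$, or repeating the computation) gives $\delta_B$, $d_B$, $d^\c_B$; but one must be careful that $\nabla$ being merely ``a torsion-free connection with $\nabla(d\eta)=0$'' — not the Hermitian connection — is still enough, which works precisely because the symbol-level commutators are insensitive to which such $\nabla$ is used. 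Once this bookkeeping is done, the proof is the same formal computation as in the almost-K\"ahler case treated in \cite{gau}.
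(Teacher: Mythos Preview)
Your plan has a genuine gap, and it is precisely the point you flag as the ``main obstacle.'' For a torsion-free connection $\nabla$ with $\nabla(d\eta)=0$ one does get
\[
d_B\alpha=\sum_i e^i\wedge\nabla_{e_i}\alpha,\qquad
\delta^\c_B\alpha=-\sum_i\iota_{\Phi e_i}\nabla_{e_i}\alpha
\]
(the first from torsion-freeness together with $\nabla_\xi\alpha=0$ on basic forms, the second from Lemma~\ref{adjoint_expr}). But the companion formulas you write down for $d^\c_B$ and $\delta_B$ are \emph{not} valid for such a $\nabla$ in the non-integrable setting. Expanding $d^\c_B\alpha=(-1)^p\Phi\,d_B(\Phi\alpha)$ with $d_B=\sum e^i\wedge\nabla_{e_i}$ produces, besides the term $\sum(\Phi e_i)^\flat\wedge\nabla_{e_i}\alpha$, an unavoidable contribution involving $(\nabla_{e_i}\Phi)$; likewise $\delta_B=-\bar\ast d_B\bar\ast$ does not reduce to $-\sum\iota_{e_i}\nabla_{e_i}$ because $\nabla$ need not preserve $g^\T$ (only $d\eta$), so $\bar\ast$ does not commute with $\nabla_{e_i}$. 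Demanding simultaneously $\nabla$ torsion-free, $\nabla(d\eta)=0$, and $\nabla\Phi=0$ forces the transverse structure to be K\"ahler, i.e.\ the Sasakian case --- exactly what you are trying to avoid. So your assertion that ``the $\nabla\Phi$-terms are exactly what is needed so that $d^\c_B$ has the stated form'' is incorrect: those terms survive.

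The paper sidesteps this entirely. It first observes that the four identities in \eqref{basic-identities} are equivalent to one another under conjugation by $\Phi$ and $\bar\ast$ (using $\Phi^2=(\bar\ast)^2=(-1)^p$ on $\Omega^p_B$), so only $[L,\delta^\c_B]=-d_B$ needs a direct proof. That single identity is then established by a short computation using \emph{only} the two formulas above that are actually available: one expands $\delta^\c_B(\alpha\wedge d\eta)-d\eta\wedge\delta^\c_B\alpha$ via Lemma~\ref{adjoint_expr}, uses $\nabla(d\eta)=0$ to kill the cross term, and recognises the result as $\sum e_i^\flat\wedge\nabla_{e_i}\alpha=d_B\alpha$. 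No expression for $d^\c_B$ or $\delta_B$ in terms of $\nabla$ is ever needed. If you add that reduction step at the start, your linear-algebra computation for the one remaining commutator becomes essentially the paper's argument; as written, however, the attempt to treat all four operators symmetrically does not go through.
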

\begin{proof}
Using $\Phi^2\alpha=(\bar{\ast})^2\alpha=(-1)^p\alpha$, we reduce to proving only the first identity. Choose an oriented, local, $g$-orthonormal basis $\{\xi,e_i\}$. For a basic $p$-form $\alpha$ we then compute, using Lemma~\ref{adjoint_expr} and a torsion free connection $\nabla$ preserving $d\eta$:
\begin{align*}
[\delta^\c_B,L]\alpha&=\delta^\c_B(\alpha\wedge d\eta)-d\eta\wedge\delta^\c_B\alpha\\
&=\sum_{i=1}^{2n} -\iota_{\Phi e_i}\left(\nabla_{e_i}\left(\alpha\wedge d\eta\right)\right)+d\eta\wedge\iota_{\Phi e_i}(\nabla_{e_i}\alpha)\\
&=\sum_{i=1}^{2n} -\iota_{\Phi e_i}\left(\nabla_{e_i}\alpha \wedge d\eta\right)+d\eta\wedge\iota_{\Phi e_i}(\nabla_{e_i}\alpha)\\
&=\sum_{i=1}^{2n} -\iota_{\Phi e_i}\left(d\eta\right)\wedge\nabla_{e_i}\alpha\\
&=\sum_{i=1}^{2n} e_i^{\flat_g}\wedge\nabla_{e_i}\alpha=d_B\alpha.
\end{align*}
Note here that the expression for $d\alpha$ in terms of covariant derivatives reduces on basic forms to the last equation since $\nabla_\xi \alpha = 0$ for basic $\alpha$. To see this, note that $\nabla(d\eta)=0$ implies $d\eta(\nabla_X \xi, Z) = 0$ for any $X,Z$ and the Reeb field $\xi$. Therefore $\nabla_X \xi$ is a multiple of $\xi$, so for basic forms $i_{\nabla_X \xi} \alpha = 0$. Using that $\nabla$ is torsion free,
\[
	0=\frakL_\xi \alpha(X_1,\ldots, X_p) = \nabla_\xi \alpha(X_1,\ldots, X_p) + \sum_{k=1}^p (-1)^k\alpha(\nabla_{X_k} \xi, X_1, \ldots \widehat{X_k} \ldots, X_p),
\]
for arbitrary tangent vectors $X_k$, and $\nabla_\xi \alpha=0$ follows.
\end{proof}
\begin{rem}\label{equal_laplacian}
It follows from Remark~\ref{commute_ddc} and the K\"ahler identities~\eqref{basic-identities} that $\Delta_B=\Delta_B^\c$ whenever the $K$-contact structure $(\eta,\xi,\Phi,g)$ is Sasakian.
\end{rem}

The Laplacians \eqref{def:laplacians} are basic transversely elliptic operators (see~\cite{kac}). Hence they are Fredholm operators, so we get \emph{basic Green operators} $\G_B$ and $\G_B^\c$ with
\begin{equation}\label{green-operators}
	\G_B\Delta_B \alpha=\Delta_B \G_B \alpha = \alpha + (\alpha)_\mathrm{H},\quad
	\G_B^\c\Delta_B^\c \alpha=\Delta_B^\c \G_B^\c \alpha = \alpha + (\alpha)_\mathrm{H^\c},
\end{equation}
where $(\alpha)_\mathrm{H}$, $(\alpha)_{\mathrm{H}^\c}$ are orthogonal projections onto $\Delta_B$ and $\Delta_B^\c$ harmonic forms. Recall that $\G_B, \Delta_B$ commute with $d_B, \delta_B$, while $\G_B^\c, \Delta_B^\c$ commute with $d_B^\c, \delta_B^\c$.

\begin{lem}\label{general-ddc-lem}
For any $d_B$-exact, $d_B^\c$-closed $\alpha\in \Omega^p_B$ there exists $\psi \in \Omega^{p-2}_B$ with
\begin{equation}
\alpha=\mathbb{G}_Bd_Bd^\c_B\psi=d_B\mathbb{G}_Bd^\c_B\psi.
\end{equation}
\end{lem}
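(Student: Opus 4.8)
The plan is to exhibit the form $\psi$ explicitly: I claim one may take $\psi := -\Lambda\alpha \in \Omega^{p-2}_B$, and that the two asserted identities then follow from a short manipulation using the basic Green operator $\mathbb{G}_B$ together with a single one of the transverse K\"ahler identities of Proposition~\ref{lem:kaehler-identities}. Note first that, since $\mathbb{G}_B$ commutes with $d_B$, the equality $\mathbb{G}_B d_B d^\c_B\psi = d_B\mathbb{G}_B d^\c_B\psi$ is automatic, so it suffices to establish the first expression.

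I would argue as follows. Since $\alpha$ is $d_B$-exact it is $L^2$-orthogonal to the space of $\Delta_B$-harmonic basic forms (a harmonic basic form is both $d_B$- and $\delta_B$-closed, so one integrates by parts against $\alpha = d_B\beta$); hence the harmonic projection of $\alpha$ vanishes and \eqref{green-operators} gives $\alpha = \mathbb{G}_B\Delta_B\alpha$. As $d_B\alpha = 0$, we have $\Delta_B\alpha = d_B\delta_B\alpha + \delta_B d_B\alpha = d_B\delta_B\alpha$, so $\alpha = \mathbb{G}_B d_B\delta_B\alpha$. The second, independent hypothesis now enters: because $\alpha$ is $d^\c_B$-closed, the K\"ahler identity $[\Lambda, d^\c_B] = \delta_B$ yields $\delta_B\alpha = \Lambda d^\c_B\alpha - d^\c_B\Lambda\alpha = -\,d^\c_B\Lambda\alpha$. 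Substituting back gives $\alpha = -\mathbb{G}_B d_B d^\c_B\Lambda\alpha = \mathbb{G}_B d_B d^\c_B\psi$ with $\psi = -\Lambda\alpha$, and then $\alpha = d_B\mathbb{G}_B d^\c_B\psi$ as well, completing the proof.

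Thus there is no serious obstacle; the only point requiring care is keeping track of which operators commute with $\mathbb{G}_B$. In the non-Sasakian (merely $K$-contact) case $\mathbb{G}_B$ commutes with $d_B$ and $\delta_B$ but \emph{not} with $d^\c_B$, $\delta^\c_B$, $L$, or $\Lambda$ --- equivalently $\Delta_B \neq \Delta_B^\c$ --- which is precisely why the statement retains the sandwiched form $\mathbb{G}_B d_B d^\c_B\psi$ rather than the cleaner $d_B d^\c_B\psi$ available in the Sasakian case (Remark~\ref{commute_ddc}). It is worth emphasising that $d_B$-exactness of $\alpha$ is used only to kill the harmonic correction term, while $d^\c_B$-closedness is used only in the K\"ahler-identity step; moreover the degenerate low-degree cases $p \leq 1$ are covered automatically, since there $\Lambda\alpha = 0$ and the same computation forces $\alpha = \mathbb{G}_B d_B\delta_B\alpha = 0$.
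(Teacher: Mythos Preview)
Your proof is correct and in fact more streamlined than the paper's. Both arguments begin identically, using $d_B$-exactness to write $\alpha = \mathbb{G}_B d_B\delta_B\alpha = d_B\mathbb{G}_B\delta_B\alpha$. From there the paper takes a detour: it inserts the $\Delta_B^\c$-Hodge decomposition $\alpha = (\alpha)_{\mathrm{H}^\c} + d^\c_B\delta^\c_B\mathbb{G}_B^\c\alpha$ (using $d^\c_B$-closedness), then manipulates each piece separately with the K\"ahler identities, arriving at the more complicated potential $\psi = -\Lambda(\alpha)_{\mathrm{H}^\c} - \delta_B\delta^\c_B\mathbb{G}_B^\c\alpha$. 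You instead apply the single identity $[\Lambda,d^\c_B]=\delta_B$ directly to $\alpha$, using $d^\c_B\alpha=0$ to get $\delta_B\alpha = -d^\c_B\Lambda\alpha$ in one stroke, yielding the simpler explicit potential $\psi = -\Lambda\alpha$. Your route avoids the twisted Green operator $\mathbb{G}_B^\c$ and the $\Delta_B^\c$-harmonic projection entirely, and does not require checking any (anti-)commutation between $\delta_B$ and $d^\c_B$. The only advantage of the paper's version is that it parallels the argument in the cited reference~\cite{lej}; yours is the cleaner proof.
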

\begin{proof}
This is an analogue of~\cite[Lemma 3.1]{lej}. Decomposing $\alpha$ with respect to $\Delta_B$ using \eqref{green-operators} gives $\alpha=d_B\delta_B\mathbb{G}_B\alpha$. With respect to $\Delta_B^\c$ we have
$\alpha=\left(\alpha\right)_{\mathrm{H}^\c}+d^\c_B\delta^\c_B\mathbb{G}_B^\c\alpha$.
Now we apply the K\"ahler identities \eqref{basic-identities} to deduce
\begin{align*}
\alpha&=d_B\delta_B\mathbb{G}_B\alpha=d_B\delta_B\mathbb{G}_B\left[\left(\alpha\right)_{\mathrm{H}^\c}+d^\c_B\delta^\c_B\mathbb{G}_B^\c\alpha\right]\\
&=d_B\mathbb{G}_B\delta_B\left(\alpha\right)_{\mathrm{H}^\c}+d_B\mathbb{G}_B\delta_Bd^\c_B\delta^\c_B\mathbb{G}_B^\c\alpha\\
&=d_B\mathbb{G}_B [\Lambda,d^\c_B]\left(\alpha\right)_{\mathrm{H}^\c}-d_B\mathbb{G}_Bd^\c_B\delta_B\delta^\c_B\mathbb{G}_B^\c\alpha\\
&=-d_B\mathbb{G}_Bd^\c_B \Lambda\left(\alpha\right)_{\mathrm{H}^\c}-d_B\mathbb{G}_Bd^\c_B\delta_B\delta^\c_B\mathbb{G}_B^\c\alpha\\
&=d_B\mathbb{G}_Bd^\c_B \left(-\Lambda\left(\alpha\right)_{\mathrm{H}^\c}-\delta_B\delta^\c_B\mathbb{G}_B^\c\alpha\right).\qedhere
\end{align*}
\end{proof}


\begin{cor}\label{ddc-lemma} 
Suppose $\omega_1,\omega_2\in\Omega_B^2$ are $d_B$-closed, $J$-invariant, and basic cohomologous. Then there exists a smooth basic function $f$ with $\omega_2=\omega_1+d_B\mathbb{G}_Bd^\c_B f$.
\end{cor}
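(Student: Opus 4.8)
The plan is to reduce Corollary~\ref{ddc-lemma} to Lemma~\ref{general-ddc-lem} by setting $\alpha=\omega_2-\omega_1$. Since $\omega_1$ and $\omega_2$ are basic cohomologous and both $d_B$-closed, the difference $\alpha=\omega_2-\omega_1\in\Omega_B^2$ is $d_B$-exact, say $\alpha=d_B\beta$ for some $\beta\in\Omega_B^1$. Moreover, $\alpha$ is $J$-invariant since $\omega_1,\omega_2$ are, so $\Phi\alpha=\alpha$; combined with $d_B\alpha=0$ this should force $d_B^\c\alpha=0$. Indeed, on a $J$-invariant basic $2$-form $d_B^\c\alpha=-\Phi\,d_B\,\Phi\alpha=-\Phi\,d_B\alpha=0$, using $\Phi\alpha=\alpha$ and the definition $d^\c=(-1)^p\Phi\,d\,\Phi$. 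Thus $\alpha$ satisfies exactly the hypotheses of Lemma~\ref{general-ddc-lem} with $p=2$.

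Applying Lemma~\ref{general-ddc-lem} produces $\psi\in\Omega_B^0(M)=C^\infty_B(M)$, i.e.\ a smooth basic function $f=-\Lambda(\alpha)_{\mathrm H^\c}-\delta_B\delta_B^\c\G_B^\c\alpha$ (or just call it $f$), with $\alpha=d_B\G_B d_B^\c f$. Rearranging, $\omega_2=\omega_1+d_B\G_B d_B^\c f$, which is the claim. One should note the degree bookkeeping: $\Omega_B^{p-2}$ with $p=2$ is $\Omega_B^0$, so the potential is genuinely a function, and $d_B^\c f$ is a basic $1$-form, $\G_B d_B^\c f$ a basic $1$-form, and $d_B$ of that a basic $2$-form, consistent with $\alpha\in\Omega_B^2$.

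The only real point requiring care — and hence the main obstacle, though a mild one — is verifying that $d_B$-exactness plus $J$-invariance genuinely implies $d_B^\c$-closedness in degree $2$, i.e.\ that the twisted differential annihilates $J$-invariant closed basic $2$-forms without any integrability assumption on $\Phi$. This follows from the purely algebraic identity $\Phi^2|_{\Omega^2_B}=\id$ recorded in the text together with $d^\c_B=\Phi\,d_B\,\Phi$ on $2$-forms, so it is formal; but it is worth spelling out since in the non-integrable setting one cannot rely on $d_Bd_B^\c+d_B^\c d_B=0$. Everything else is a direct substitution into Lemma~\ref{general-ddc-lem}, so the corollary is essentially immediate once this observation is in place.
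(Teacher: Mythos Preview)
Your proof is correct and follows exactly the paper's intended route: the corollary is stated without proof as an immediate consequence of Lemma~\ref{general-ddc-lem}, and your argument supplies the one missing verification, namely that a $d_B$-closed $J$-invariant basic $2$-form is automatically $d_B^\c$-closed. (One harmless sign slip: for $p=2$ the definition gives $d_B^\c\alpha = +\,\Phi\,d_B\,\Phi\alpha$, not $-\Phi\,d_B\,\Phi\alpha$, but since $d_B\alpha=0$ the conclusion is unaffected.)
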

\begin{rem}
Corollary~\ref{ddc-lemma} generalizes the {{transverse $\partial\overline\partial$-Lemma}}~\cite{kac}. Indeed, Remark~\ref{equal_laplacian}
implies that in the Sasakian case $\omega_2=\omega_1+d_B\mathbb{G}_Bd^\c_B f=\omega_1+d_Bd^\c_B\mathbb{G}_B f$.
\end{rem}

\subsection{The Space $\scrK(\xi,J)$ in Dimension $5$}

In dimension $2n+1$ the bundle $\Lambda^2_B$ of basic $2$-forms decomposes into $\pm1$-eigenspaces of the transverse Hodge operator
\[
	\Lambda^2_B=\Lambda^+_B\oplus\Lambda^-_B.
\]
Similarly $\Lambda^2_B = \Lambda^{J,-}_B \oplus \Lambda^{J,+}_B$ into the $\pm1$-eigenspaces of $J$. In dimension $5$ we have
\begin{equation}\label{decomp-2-forms}
\Lambda^+_B=\mathbb{R}\,.\,d\eta\oplus \Lambda^{J,-}_B,\qquad\Lambda^-_B= \Lambda^{J,+}_{0},
\end{equation}
where $\Lambda^{J,+}_{0}$ is the subbundle of $J$-invariant $2$-forms pointwise orthogonal to $d\eta$.\smallskip

We denote by $b_B^+$ (resp. $b_B^-$) the dimension of the space of $\Delta_B$-harmonic $\bar{\ast}$-self-dual
(resp.~$\bar{\ast}$-anti-self-dual) basic $2$-forms. Since $\Delta_B$-harmonic basic $2$-forms are preserved by the $\bar{\ast}$-operator, the dimension of the basic cohomology is
\[
	b^2_B := \dim H^2_{\mathcal{F}_\xi}(M)=b_B^+ + b_B^-.
 \]
Let $h_{J,B}^-$ be the dimension of the $\Delta_B$-harmonic $J$-anti-invariant basic $2$-forms. It is easy to see that this definition agrees with that in~\cite{dra-li-zha}.

\begin{prop}\label{J-invariance}
Let $(M,\eta,\xi,\Phi,g)$ be a $5$-dimensional compact $K$-contact manifold. If $h_{J,B}^-=b_B^+-1$, then for any basic function $f$, $d_B\mathbb{G}_Bd^\c_B f$ is $J$-invariant.
\end{prop}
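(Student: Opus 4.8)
The plan is to show that the $J$-anti-invariant part of $\alpha:=d_B\mathbb{G}_B d^\c_B f$ vanishes. Decompose $\alpha$ according to \eqref{decomp-2-forms},
\[
\alpha=c\, d\eta+\beta+\gamma,\qquad c\in C^\infty_B(M),\ \ \beta\in\Gamma(\Lambda^{J,-}_B),\ \ \gamma\in\Gamma(\Lambda^-_B);
\]
since $d\eta$ is $J$-invariant (Definition~\ref{def:K-contact}) and $\Lambda^-_B=\Lambda^{J,+}_0$ consists of $J$-invariant forms, $J$-invariance of $\alpha$ is equivalent to $\beta=0$. Note at the outset that $\alpha=d_B(\mathbb{G}_B d^\c_B f)$ is $d_B$-exact, hence $L^2$-orthogonal to the space $\mathcal{H}^2_B$ of $\Delta_B$-harmonic basic $2$-forms. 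Moreover, by \eqref{decomp-2-forms} every $J$-anti-invariant basic $2$-form is $\bar\ast$-self-dual, and a $\bar\ast$-self-dual $d_B$-closed $2$-form is $\Delta_B$-harmonic (because $\delta_B=-\bar\ast d_B\bar\ast$); in particular $\beta$ is $\bar\ast$-self-dual, and $\beta\in\mathcal{H}^{J,-}_B$ will be equivalent to $d_B\beta=0$.

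First I would use the hypothesis to identify $\mathcal{H}^+_B$. Because $2n+1=5$, the transverse Hodge star fixes $d\eta$, so $d\eta\in\mathcal{H}^+_B$; with the previous remark this gives $\mathbb{R}\, d\eta\oplus\mathcal{H}^{J,-}_B\subseteq\mathcal{H}^+_B$, the sum being direct as $d\eta$ is $J$-invariant. Hence $1+h_{J,B}^-\le b_B^+$, and the hypothesis $h_{J,B}^-=b_B^+-1$ forces
\[
\mathcal{H}^+_B=\mathbb{R}\, d\eta\oplus\mathcal{H}^{J,-}_B .
\]
In particular $\mathcal{H}^{J,-}_B\subseteq\mathcal{H}^2_B$, so $\langle\alpha,\beta'\rangle_{L^2}=0$ for every $\beta'\in\mathcal{H}^{J,-}_B$; and since $d\eta$ and $\gamma$ are pointwise orthogonal to $\beta'$ (the former because it is $J$-invariant while $\beta'$ is $J$-anti-invariant, the latter because $\Lambda^+_B\perp\Lambda^-_B$), this reads $\langle\beta,\beta'\rangle_{L^2}=0$. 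Thus $\beta\perp_{L^2}\mathcal{H}^{J,-}_B$, and it remains to prove $\beta\in\mathcal{H}^{J,-}_B$ --- for then $\|\beta\|_{L^2}^2=\langle\beta,\beta\rangle_{L^2}=0$.

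So the goal is $d_B\beta=0$. Using $d_B\alpha=0$, $d^\c_B=\Phi d_B\Phi$ on $2$-forms and $\Phi^2=-\id$ on $3$-forms one finds $d_B\beta=\tfrac12\Phi\, d^\c_B\alpha$, so it suffices to show $d^\c_B\alpha=0$. In transverse dimension $4$ the Lefschetz operator $L$ of \eqref{trace-operator} restricts to an isomorphism from basic $1$-forms onto basic $3$-forms, with inverse a nonzero multiple of $\Lambda$; hence it is enough to show that the basic $1$-form $\Lambda(d^\c_B\alpha)$ vanishes. Set $g=\mathbb{G}_B d^\c_B f$, so $\alpha=d_B g$. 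Using the transverse K\"ahler identities of Proposition~\ref{lem:kaehler-identities} --- together with $\delta_B d^\c_B f=0$ (from $\delta_B=[\Lambda,d^\c_B]$, $\Lambda d^\c_B f=0$ and $(d^\c_B)^2=0$), whence $\delta_B g=\mathbb{G}_B\delta_B d^\c_B f=0$ --- I would compute
\[
\Lambda(d^\c_B\alpha)=\Lambda d^\c_B d_B g=\delta_B d_B g-d^\c_B\delta^\c_B g=\Delta_B g-d^\c_B\delta^\c_B g ,
\]
and, since $\delta^\c_B g=d_B\Lambda g-\Lambda d_B g=-\Lambda\alpha$ and $\Delta_B g=d^\c_B f-(d^\c_B f)_{\mathrm H}$ (Hodge decomposition), this becomes
\[
\Lambda(d^\c_B\alpha)=d^\c_B\bigl(f+\Lambda\alpha\bigr)-(d^\c_B f)_{\mathrm H} .
\]
Everything thus reduces to the identity $d^\c_B(f+\Lambda\alpha)=(d^\c_B f)_{\mathrm H}$: one must show that the $\Delta_B$-harmonic part of the basic $1$-form $d^\c_B f$ is $d^\c_B$ of a basic function, and that this function is $-(f+\Lambda\alpha)$. \emph{This identity is the step I expect to be the main obstacle.} It is here that transverse dimension $4$ is genuinely used --- through the interplay of \eqref{decomp-2-forms} with the Lefschetz decomposition --- and where the cohomological hypothesis $h_{J,B}^-=b_B^+-1$ enters, via the description $\mathcal{H}^+_B=\mathbb{R}\, d\eta\oplus\mathcal{H}^{J,-}_B$ above, which removes precisely the harmonic forms that would obstruct closedness of $\beta$; the lower-order terms are to be organized using the transverse Nijenhuis tensor as in Remark~\ref{commute_ddc}. (As a consistency check, in the Sasakian case $\Lambda\alpha=\bar f-f$, $(d^\c_B f)_{\mathrm H}=0$ and $d^\c_B\alpha=0$ automatically, in accordance with Remark~\ref{equal_laplacian}.)

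Once $d^\c_B\alpha=0$ is established, $\beta$ is $\bar\ast$-self-dual and $d_B$-closed, hence $\Delta_B$-harmonic, so $\beta\in\mathcal{H}^{J,-}_B$; combined with $\beta\perp_{L^2}\mathcal{H}^{J,-}_B$ this gives $\beta=0$, and therefore $\alpha=c\, d\eta+\gamma$ is $J$-invariant.
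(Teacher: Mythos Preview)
Your proposal has a genuine gap: everything hinges on establishing $d^\c_B\alpha=0$, which you reduce to the identity $d^\c_B(f+\Lambda\alpha)=(d^\c_B f)_{\mathrm H}$ and then explicitly leave open (``the step I expect to be the main obstacle''). This is not a technical lemma but essentially the entire content of the proposition. The parts of your argument that \emph{are} complete---the orthogonality $\beta\perp_{L^2}\mathcal{H}^{J,-}_B$ coming from exactness of $\alpha$---do not use the hypothesis $h_{J,B}^-=b_B^+-1$ at all, so you have deferred the whole role of the hypothesis to the unproven step, with only a vague indication (``removes precisely the harmonic forms that would obstruct closedness of $\beta$'') of how it should enter. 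In fact, without the hypothesis $\beta$ is in general \emph{not} $d_B$-closed (see below), so your strategy of first proving $d_B\beta=0$ and then using orthogonality cannot be carried out as an independent preliminary step; under the hypothesis, $d_B\beta=0$ is equivalent to $\beta=0$.

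The paper proceeds differently and more directly. Rather than trying to show $\beta$ is harmonic, one computes the $J$-anti-invariant part explicitly via the K\"ahler identities and the Hodge decomposition, obtaining
\[
(d_B\mathbb{G}_Bd^\c_B f)^{J,-}=\tfrac{1}{2}(f_0\,d\eta)_{\mathrm H}-\tfrac{1}{4}\,g\big((f_0\,d\eta)_{\mathrm H},d\eta\big)\,d\eta,
\]
where $f_0$ is the zero-mean part of $f$ and $g(\cdot,\cdot)$ is the pointwise inner product. The hypothesis then enters cleanly: harmonic projection commutes with $\bar\ast$, so $(f_0\,d\eta)_{\mathrm H}\in\mathcal{H}^+_B=\mathbb{R}\,d\eta\oplus\mathcal{H}^{J,-}_B$; but $f_0\,d\eta$ is $J$-invariant (hence $L^2$-orthogonal to $\mathcal{H}^{J,-}_B$) and orthogonal to $d\eta$ (since $\int f_0=0$), so $(f_0\,d\eta)_{\mathrm H}=0$ and the displayed expression vanishes. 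Note that without the hypothesis the pointwise inner product $g\big((f_0\,d\eta)_{\mathrm H},d\eta\big)$ need not be constant, so the displayed form need not be $d_B$-closed---confirming that your intermediate goal $d^\c_B\alpha=0$ genuinely requires the hypothesis and is no easier than the proposition itself.
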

\begin{proof}
The proof is similar to that of~\cite[Proposition 2]{lej}, so we only sketch the argument. Beginning with \eqref{decomp-2-forms}, a computation using the K\"ahler identities \eqref{basic-identities} shows that the $J$-anti-invariant part of $d_B\mathbb{G}_Bd^\c_B f$ is 
\[
(d_B\mathbb{G}_Bd^\c_B f)^{J,-}=\frac{1}{2}\left(f_0d\eta\right)_\mathrm{H}-\frac{1}{4}g\big(\left(f_0d\eta\right)_\mathrm{H},d\eta\big)d\eta,
\]
for the orthogonal projection $f_0$ of $f$ onto the complement of the constants and
the $\Delta_B$ harmonic part $\left(f_0d\eta\right)_\mathrm{H}$. The condition $h_{J,B}^-=b_B^+-1$ implies $\left(f_0d\eta\right)_\mathrm{H}=0$. 
\end{proof}

Using Remark~\ref{equal_laplacian} we see $h_{J,B}^-=b_B^+-1$ when the $K$-contact structure is Sasakian. It is well-known that an almost complex structure is integrable precisely when $(dd^\c + d^\c d)f$ is $J$-invariant for every function $f$. The condition $h_{J,B}^-=b_B^+-1$ appears therefore as a semi integrability condition.

\begin{defn}\label{def:semiSasakian}
A $K$-contact structure is \emph{semi-Sasakian} if $h_{J,B}^-=b_B^+-1$ holds.
\end{defn}

\begin{rem}\label{upper-continuous}
Under a smooth variation of the transverse almost-complex structure $J_t$, the dimension $h_{J_t,B}^-$ is an upper semi-continuous function of $t$. To see this, consider the family of basic transversally strongly elliptic differential operators
\[
P_t \colon \Omega^{J_t,-}_B\rightarrow\Omega^{J_t,-}_B,\quad
\alpha \mapsto \left(d\delta^{t}_B\alpha\right)^{J_t,-}.
\]
Here, $\Omega_B^{J_t,-}$ are basic $J_t$-anti-invariant $2$-forms, $(\,)^{J_t,-}$ is the projection, and $\delta^{t}_B$ is the adjoint of $d_B$ with respect to the $K$-contact metric induced by $J_t$. Then $h_{J_t, B}^-$ is the kernel of $P_t$, whose dimension is an upper semi-continuous function (using \cite[Theorem~6.1]{kac-gmi}, an adaption of \cite[Theorem~4.3]{kod-mor}, see also~\cite{dra-li-zha}).
\end{rem}

We may now generalize \cite[Proposition 7.5.7]{bo-ga} to the semi-Sasakian case:

\begin{thm}\label{space_of_deformation}
Let $(M,\eta,\xi,\Phi,g)$ be a $5$-dimensional compact semi-Sasakian manifold with transverse structure $J=\Phi^\T$. Then we have a diffeomorphism
\begin{equation}\label{K-identification}
\mathcal{K}(\xi,J) \simeq\mathcal{H}\times C^\infty_{B,0}(M)\times H^1(M,\mathbb{R}),
\end{equation}
for the basic functions with zero integral $C^\infty_{B,0}(M)$ and
where
\[
\mathcal{H}  =\left\{f\in C^\infty_{B,0}(M) \enskip\middle|\enskip (d\eta+d_B\mathbb{G}_Bd^\c_B f)(X,\Phi X)>0\enskip\forall X\in\ker\left(\eta+\mathbb{G}_Bd^\c_B f\right)\right\}.
\]
\end{thm}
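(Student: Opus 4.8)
The plan is to reduce the description of $\mathcal{K}(\xi,J)$ to a parametrisation of the admissible contact forms, and then to apply the transverse Hodge theory developed above. First, a $K$-contact structure in $\mathcal{K}(\xi,J)$ is determined by its contact $1$-form: writing it as $\eta'=\eta+\sigma$, the Reeb conditions $\eta'(\xi)=1$, $\iota_\xi d\eta'=0$ give $\sigma(\xi)=0$ and $\iota_\xi d\sigma=0$, hence $\frakL_\xi\sigma=0$ and $\sigma\in\Omega^1_B(M)$; and the compatible endomorphism $\Phi'$ is forced to be the extension of $J$ with $\Phi'\xi=0$ relative to $TM=\mathbb{R}\xi\oplus\ker\eta'$. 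Checking Definition~\ref{def:K-contact}, the triple $(\eta',\xi,\Phi')$ is then $K$-contact with $(\Phi')^\T=J$ if and only if the basic $2$-form $\omega':=d\eta+d_B\sigma$ is $J$-invariant and $\omega'(X,\Phi'X)>0$ on $\ker\eta'\setminus\{0\}$. Since $\omega'$ is basic and $(\Phi')^\T=J$, both conditions depend only on $\omega'$ and $J$, not on the choice of $\eta'$; in particular the positivity is exactly the one occurring in the definition of $\mathcal{H}$, and it also forces $\eta'\wedge(d\eta')^n\neq0$. (Note $d\eta|_\nu$ is $J$-invariant by the $K$-contact axioms.)

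Next I would define $\Psi(f,u,a)=\eta+\mathbb{G}_Bd^\c_Bf+d_Bu+\sigma_h(a)$, where $\sigma_h(a)$ is the $\Delta_B$-harmonic basic $1$-form representing $a$ under the isomorphism $H^1_{\mathcal{F}_\xi}(M)\cong H^1(M,\mathbb{R})$, valid for $K$-contact manifolds by the Gysin sequence since $[d\eta]\neq0$ in $H^2_B$. Each summand is a basic $1$-form annihilating $\xi$, and since $d_Bu$ and $\sigma_h(a)$ are $d_B$-closed, $d\bigl(\Psi(f,u,a)\bigr)=d\eta+d_B\mathbb{G}_Bd^\c_Bf$. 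Here the semi-Sasakian hypothesis enters: by Proposition~\ref{J-invariance} this $2$-form is $J$-invariant, while $f\in\mathcal{H}$ makes it $J$-positive, so by the first paragraph $\Psi(f,u,a)\in\mathcal{K}(\xi,J)$.

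It then remains to show $\Psi$ is a bijection. For surjectivity, take $\eta'=\eta+\sigma\in\mathcal{K}(\xi,J)$; then $\omega'=d\eta+d_B\sigma$ is $d_B$-closed, $J$-invariant, and basic-cohomologous to $d\eta$, so Corollary~\ref{ddc-lemma} yields a basic $f$, which we normalise to lie in $C^\infty_{B,0}(M)$, with $d_B\sigma=d_B\mathbb{G}_Bd^\c_Bf$. Then $\sigma-\mathbb{G}_Bd^\c_Bf$ is a $d_B$-closed basic $1$-form, so by basic Hodge decomposition it equals $d_Bu+\sigma_h(a)$ with $u\in C^\infty_{B,0}(M)$; hence $\eta'=\Psi(f,u,a)$ and $f\in\mathcal{H}$ by the first paragraph. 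For injectivity, suppose $\Psi(f_1,u_1,a_1)=\Psi(f_2,u_2,a_2)$ and put $h=f_1-f_2$. Applying $d_B$ gives $\mathbb{G}_Bd_Bd^\c_Bh=0$, so $d_Bd^\c_Bh$ is $\Delta_B$-harmonic; being $d_B$-exact it vanishes, and then applying $\Lambda$ together with the K\"ahler identities~\eqref{basic-identities}, which give $\Lambda d_Bd^\c_B=-\Delta_B^\c$ on basic functions, yields $\Delta_B^\c h=0$, hence $h$ is constant, hence $h=0$. Now $d_Bu_1+\sigma_h(a_1)=d_Bu_2+\sigma_h(a_2)$, and uniqueness of the Hodge decomposition gives $u_1=u_2$ and $a_1=a_2$. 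Since $\Psi$ is affine with inverse assembled from $d_B$, $\mathbb{G}_B$ and Hodge projections, $\Psi$ and $\Psi^{-1}$ are smooth, yielding the asserted diffeomorphism.

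I expect the main obstacle to be the structural reduction of the first paragraph together with pinning down precisely where semi-Sasakianness is used: Proposition~\ref{J-invariance} is what guarantees that $d_B\mathbb{G}_Bd^\c_Bf$ is always $J$-invariant, so that $\Psi$ lands in $\mathcal{K}(\xi,J)$, while Corollary~\ref{ddc-lemma} is what guarantees, conversely, that every admissible $J$-invariant deformation $\omega'$ of $d\eta$ in its basic class has the form $d\eta+d_B\mathbb{G}_Bd^\c_Bf$; without the hypothesis $h^-_{J,B}=b^+_B-1$ both of these fail and the identification breaks down. Some care is also needed to see that the two a priori different positivity conditions — the one defining $\mathcal{H}$ and the one needed for $(\eta',\xi,\Phi')$ to be a genuine $K$-contact structure — coincide, which works precisely because $d\eta'$ is basic.
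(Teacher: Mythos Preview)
Your proof is correct and follows essentially the same route as the paper's: both reduce an element of $\mathcal{K}(\xi,J)$ to the basic $1$-form $\sigma=\eta'-\eta$, invoke Corollary~\ref{ddc-lemma} to write $d_B\sigma=d_B\mathbb{G}_Bd^\c_Bf$, decompose the remaining closed basic $1$-form via Hodge theory (you use harmonic representatives, the paper a chosen basis of $H^1_{\scrF_\xi}(M)$), and use Proposition~\ref{J-invariance} to show the inverse map lands in $\mathcal{K}(\xi,J)$. Your injectivity argument is more explicit than the paper's; one small correction to your closing commentary: Corollary~\ref{ddc-lemma} does \emph{not} require the semi-Sasakian hypothesis---it holds for any $K$-contact structure---so only Proposition~\ref{J-invariance} is where $h_{J,B}^-=b_B^+-1$ genuinely enters.
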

\begin{proof}
Now that Corollary~\ref{ddc-lemma} and Proposition~\ref{J-invariance} are in place,
the proof runs parallel to the Sasakian case (see~\cite[Proposition 7.5.7]{bo-ga}).
We pick a basis $[\alpha_1]_B, \ldots, [\alpha_N]_B$ of the space $H^1_{\mathcal{F}_\xi}(M)\cong H^1(M,\mathbb{R})$ \cite[Proposition 7.2.3]{bo-ga}. Note the diffeomorphism
\[
	\psi\colon H^1_{\scrF_\xi}(M) \times C^\infty_{B,0}(M) \rightarrow \Omega^1_{B,\text{closed}}(M),\quad
	\left(\sum\lambda_i[\alpha_i]_B, g \right)\mapsto \sum \lambda_i \alpha_i + dg,
\]
endowing $H^1_{\scrF_\xi}(M)$ with the norm-topology of finite-dimensional vector space.

Suppose $(\overline\eta,\xi,\overline\Phi,\overline{g})\in\mathcal{K}(\xi,J)$. Then $\overline\eta-\eta$ is a basic $1$-form, so $[d\overline\eta]_B=[d\eta]_B$. By Corollary~\ref{ddc-lemma}, there exists a unique $f\in C^\infty_{B,0}$
with $d\overline\eta=d\eta+d_B\mathbb{G}_Bd^\c_B f$, depending smoothly on $\overline\eta$, \emph{e.g.}~because of the formula for $f$ in the proof of Lemma~\ref{general-ddc-lem}.
The $1$-form $\alpha=\overline\eta-\eta-\mathbb{G}_Bd^\c_B f$ is closed and basic. Letting $\psi^{-1}(\alpha) = ([\alpha]_B, g)$, we may define the smooth one-to-one map \eqref{K-identification} by $(\overline\eta,\xi,\overline\Phi,\overline{g}) \mapsto (f, g, [\alpha]_B)$.

Conversely, given $(f,g,[\alpha]_B)\in\mathcal{H}\times C^\infty_{B,0}(M)\times H^1(M,\mathbb{R})$, let $\psi(g,[\alpha]_B) = \beta$.
Define $\overline\eta=\eta+\mathbb{G}_Bd^\c_B f+\beta$. Since $i_\xi \overline (\eta\wedge (d\overline\eta)^2)=(d\eta+\G_B d_B d_B^\c f)^2$, the positivity assumption on $f \in \mathcal{H}$ implies that $\overline\eta$ is a contact form. Moreover, using Proposition~\ref{J-invariance} one easily checks that
\[
\overline\Phi=\Phi-\xi\otimes\left(\overline\eta-\eta\right)\circ\Phi,\qquad
\overline{g}=d\overline\eta\circ(\overline\Phi\otimes \id)+\overline\eta\otimes\overline\eta,
\]
determine a $K$-contact structure $(\overline\eta,\xi,\overline\Phi,\overline{g})$.
\end{proof}

\section{Extremal $K$-contact metrics}\label{sec4}
Throughout this section, $(M,\eta)$ is a compact contact manifold of dimension $2n+1$ with volume form $dv_\eta={(2n!)}^{-1}\eta\wedge\left(d\eta\right)^{2n}$.
The aim of this section is to introduce natural representatives of $K$-contact structures on $(M,\eta)$ via a moment map set-up. Once we have proven the main Theorem~\ref{moment_map}, we may use this description of extremal metrics to establish easily a number of interesting consequences.


\subsection{The Transverse Hermitian Scalar Curvature as a Moment Map}

Let $\mathcal{K}_\eta$ be the Fr\'echet space of $K$-contact structures compatible with $\eta$, equipped with its formal K\"ahler structure $(\bf{\Omega},\bf{J})$ described in~\cite{he} (which is analogous to~\cite{don} in the K\"ahler and \cite{fuj} in the almost-K\"ahler case). We have
\[
T_\Phi \mathcal{K}_\eta=\left\{A\in \mathrm{End}(TM) \middle| A\xi=0,\mathfrak{L}_\xi A=0,A\Phi+\Phi A=0, d\eta(A\cdot,\cdot)+d\eta(\cdot, A\cdot)=0\right\}. 
\]

The K\"ahler form is defined at the point $\Phi \in \scrK_\eta$ by
\[
{\bf{\Omega}}_\Phi(A,B)=\int_M \mathrm{trace}(\Phi\circ A\circ B)\, dv_\eta,
\]
while an ${\bf{\Omega}}$-compatible almost-complex structure (in fact integrable) is given by
\[
	{\bf{J}}_\Phi A=\Phi\circ A.
\]

The action of the strict contactomorphism group $\gamma \in \Con(M,\eta)$ on $\mathcal{K}_\eta$ via ${\gamma_\ast \circ \Phi \circ \gamma_\ast^{-1}}$  is by symplectomorphisms, observing that $\gamma^* dv_\eta = dv_\eta$.

\begin{thm}\label{moment_map}\textup(see~\cite[Remark 4.3]{he}\textup)
The action of $\Con(M,\eta)$ on $\scrK_\eta$ is Hamiltonian. The moment map $\mu\colon \mathcal{K}_\eta\longrightarrow \Lie\Con(M,\eta) ^\ast \cong C^\infty_B(M)^*$ is
\[
\mu(\Phi)(f)=-\int_M \bar{s}_\Phi^\T f\,dv_\eta,
\]
for the transverse Hermitian scalar curvature $\bar{s}^\T_\Phi$ of $\Phi$ and where we use \eqref{contact-hamiltonian}.
\end{thm}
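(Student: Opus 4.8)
The plan is to follow the by-now-standard Donaldson--Fujiki strategy, adapted to the contact setting exactly as He did in the Sasakian case, the only difference being that we no longer assume integrability of $\Phi^\T$. The key identity to establish is the infinitesimal moment map equation: for every $X_f \in \Lie\Con(M,\eta)$ with contact Hamiltonian $f \in C^\infty_B(M)$, and every $A \in T_\Phi \scrK_\eta$,
\[
	{\bf\Omega}_\Phi\bigl(\mathcal{L}_{X_f}\Phi,\, A\bigr) = \frac{d}{dt}\Big|_{t=0} \mu(\Phi_t)(f),
\]
where $\Phi_t$ is a path with $\dot\Phi_0 = A$, together with $\Con(M,\eta)$-equivariance of $\mu$. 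Equivariance is essentially automatic from naturality: $\bar s^\T$ is built from $\eta$ and $\Phi$ by natural differential-geometric operations, so $\bar s^\T_{\gamma_*\Phi\gamma_*^{-1}} = (\gamma^{-1})^*\bar s^\T_\Phi$, and $\gamma^*dv_\eta = dv_\eta$; combined with the fact that the identification \eqref{contact-hamiltonian} is $\Con$-equivariant, this gives the required equivariance of $\mu$.

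First I would reduce the computation to the transverse Kähler geometry already developed in Section~\ref{sec3}. Since $A\xi=0$, $\mathcal{L}_\xi A=0$ and $A$ anticommutes with $\Phi$ while being $d\eta$-antisymmetric, the tangent vector $A$ descends to a variation of the transverse almost-complex structure $J=\Phi^\T$ on $\nu$; dually, $A$ corresponds to a basic $J$-anti-invariant $2$-form via $\sigma_A := d\eta(A\cdot,\cdot)$ (this is the transverse analogue of the identification used by Donaldson and Fujiki). Likewise $\mathcal{L}_{X_f}\Phi$, being tangent to $\scrK_\eta$, corresponds to a basic form, and one computes it explicitly in terms of $d_B^\c f$ as in \cite{he}. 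The upshot is that ${\bf\Omega}_\Phi(\mathcal{L}_{X_f}\Phi, A)$ becomes an $L^2$-pairing of basic forms that can be manipulated with the Kähler identities \eqref{basic-identities} and the transverse Hermitian connection $\bar\nabla^\T$.

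Next I would compute the first variation of the Hermitian scalar curvature. The point of the Hermitian (rather than Levi-Civita) Ricci form is precisely that $\bar\rho^\T$ behaves like the Chern--Ricci form in Kähler geometry, so its first variation under $\dot\Phi_0 = A$ has the shape $\delta \bar\rho^\T = d_B d_B^\c(\text{something linear in }A) + (\text{lower-order terms})$; tracing with $\Lambda_\omega$ and integrating against $f$, the $d_Bd_B^\c$-part pairs (after integration by parts using $\delta_B = [\Lambda,d_B^\c]$ and $\delta_B^\c = -[\Lambda,d_B]$ from Proposition~\ref{lem:kaehler-identities}) against a second-order basic operator applied to $f$, which one recognizes as the linearization $\sigma_{\mathcal{L}_{X_f}\Phi}$. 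Here the non-integrability enters only through the Nijenhuis/torsion terms of $\bar\nabla^\T$, and the content of the computation is that these extra terms match on both sides — this is exactly the transverse, non-integrable analogue of the identities that make Donaldson's almost-Kähler moment map work, and it is the step I expect to be the main obstacle. Concretely, one must verify that the (generally nonzero) torsion contributions to $\frac{d}{dt}\int_M \bar s^\T_{\Phi_t} f\, dv_\eta$ assemble into $-{\bf\Omega}_\Phi(\mathcal{L}_{X_f}\Phi,A)$ rather than producing an uncancelled remainder; granting He's Sasakian computation as the integrable baseline, this amounts to checking that the correction terms are $\bf\Omega$-skew in $(\mathcal{L}_{X_f}\Phi, A)$, which follows because both the variation of the Hermitian connection and the variation of $\omega^\T=d\eta$ are controlled by $A$ in a manner parallel to Fujiki's and Donaldson's arguments. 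Once the first-variation identity and equivariance are in hand, the moment map property is immediate, and the minus sign in $\mu(\Phi)(f) = -\int_M \bar s^\T_\Phi f\, dv_\eta$ is fixed by our sign conventions for ${\bf\Omega}_\Phi$ and the contact Hamiltonian \eqref{contact-hamiltonian}.
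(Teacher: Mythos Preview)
Your strategy is reasonable and, in outline, is the invariant route that the paper itself sketches in the paragraph \emph{after} the proof (Mohsen formula $2\dot\alpha = g(\delta A,\cdot)$ for the variation of the Hermitian connection, then $\dot{\bar s}^\T = -\delta J(\delta A)^\flat$, combined with the transverse K\"ahler identities). But as written your proposal is not a proof: you explicitly identify the non-integrable correction terms as ``the main obstacle'' and then dispose of them with the assertion that they are ${\bf\Omega}$-skew ``in a manner parallel to Fujiki's and Donaldson's arguments.'' That is the entire content of the theorem in the non-integrable case, and it cannot be absorbed into a citation. (Also, you speak of the ``variation of $\omega^\T=d\eta$''; but $\eta$ is fixed here and $\omega^\T$ does not vary --- only $\Phi$ does.)

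The paper avoids this obstacle altogether by working in contact Darboux coordinates and importing from Gauduchon the variation formula for the \emph{Hermitian} scalar curvature,
\[
Q(A)=\dot{\bar s}^\T=-(g^{ks}\Phi^l_k(A^j_s)_{;j})_{;l},
\]
which is valid in the almost-K\"ahler (non-integrable) setting precisely because one is varying the Chern--Ricci form rather than the Riemannian Ricci form. With that formula in hand, the proof is a direct computation: one writes $\mathcal{L}_{X_f}\Phi$ in coordinates, observes that one of the three resulting terms in $\mathrm{trace}(\Phi\circ\mathcal{L}_{X_f}\Phi\circ A)$ vanishes because it is the trace of a $g^\T$-symmetric tensor against a $g^\T$-antisymmetric one, notes that the other two terms coincide, and then integrates by parts twice to land on $\int_M f\,Q(A)\,dv_\eta$. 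No separate bookkeeping of Nijenhuis or torsion terms is required --- they are already packaged into Gauduchon's formula for $Q(A)$.

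If you want to push your invariant approach through, the honest way is to establish the transverse Mohsen formula and the identity $\dot{\bar s}^\T=-\delta_B\Phi(\delta_B A)^\flat$ directly (as the paper's remark suggests), and then pair with $f$ and integrate by parts using Proposition~\ref{lem:kaehler-identities}; the torsion terms then never appear explicitly. What you cannot do is leave that step as an analogy.
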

\begin{proof}
The infinitesimal action of $X \in \Lie \Con(M,\eta)$ at a point $\Phi\in\mathcal{K}_\eta$ is given by $\widehat{X}(\Phi) = -\mathfrak{L}_X \Phi$. For a tangent vector $A\in T_\Phi \scrK_\eta$ let $Q(A) \in C^\infty_B$ be the derivative of the map $\Phi \mapsto \bar{s}_\Phi^\T$ in direction $A$.
We must show
\begin{equation}\label{toshow}
	\mathbf{\Omega}_\Phi( -\frakL_{X_f}\Phi, A) = -\int_M f\cdot Q(A) dv_\eta.
\end{equation}



We do local computations as in~\cite{he}. Pick local coordinates as in Subsection~\ref{transverse_section}. We shall write $f_k=f_{,k} = \partial f/\partial x^k$ and $f_{;k}$ for covariant differentiation.

From $d\eta(X_{f}\,,\cdot)=-df$ we have (we adopt the summation convention that roman indices, if appearing twice, range over $1,\ldots, n$, \emph{i.e.}~excluding zero)
\begin{equation}\label{Xf}
X_f=-\frac{1}{2}\Phi^j_kg^{ki}f_j\frac{\partial}{\partial x^i} + f\frac{\partial}{\partial x^0}
\end{equation}
Since $\Phi$ is basic, we may locally write
\[
\mathcal{L}_{X_f}\Phi=B^i_j\,\frac{\partial}{\partial x^i}\otimes dx^j.
\]
The local coordinate formula for the Lie derivative combined with \eqref{Xf} gives
\[
B^i_j=
-\frac{1}{2}\Phi^i_{j,p}\Phi^l_kg^{kp}f_l
+\frac{1}{2}\Phi^p_j\left(\Phi^l_kg^{ki}f_l\right)_{,p}
-\frac{1}{2}\Phi^i_p\left(\Phi^l_kg^{kp}f_l\right)_{,j}.
\]
From \eqref{coord-expressions} we see then
\[
\Phi^s_iB^i_j=-\frac{1}{2}\Phi^s_i\Phi^i_{j,p}\Phi^l_kg^{kp}f_l+\frac{1}{2}\Phi^s_i\Phi^p_j\left(\Phi^l_kg^{ki}f_l\right)_{,p}
+\frac{1}{2}\left(\Phi^l_kg^{ks}f_l\right)_{,j}.
\]
Therefore we have
\begin{align*}
& \mathbf{\Omega}_\Phi( \frakL_{X_f}\Phi, A)= \int_M \mathrm{trace}(\Phi\circ \mathcal{L}_{X_f}\Phi\circ A)\,dv_\eta\\
=&\int_M\left(-\frac{1}{2}\Phi^s_i\Phi^i_{j,p}\Phi^l_kg^{kp}f_lA^j_s+\frac{1}{2}\Phi^s_i\Phi^p_j\left(\Phi^l_kg^{ki}f_l\right)_{,p}A^j_s
+\frac{1}{2}\left(\Phi^l_kg^{ks}f_l\right)_{,j}A^j_s\,\right)dv_\eta.
\end{align*}
Let $C^s_j = \Phi_i^s \Phi^i_{j,p} \Phi^l_k g^{kp} f_l$. Using \eqref{coord-expressions} and its derivative one checks ${g_{sl} C^s_j = - g_{sj} C^s_l}$, so $C$ is $g^\T$-anti-symmetric. On the other hand, $A^j_s$ is $g^\T$-symmetric and so the trace $C_j^k A^j_s$, the first summand in the bracket, vanishes.

The second and third summand are equal (from $\Phi^k_j A^j_s = - \Phi^j_s A^k_j$) so
\[
\mathbf{\Omega}_\Phi( \frakL_{X_f}\Phi, A)=\int_M\left(\Phi^l_kg^{ks}f_l\right)_{,j}A^j_s\,dv_\eta.
\]
The variation of the transverse Hermitian scalar curvature (see~\cite{gau}) is given in terms of the variation of $\Phi$ along $A$ by
\[
{\dot{\bar{s}}}^\T=Q(A)= -(g^{ks}\Phi^l_k (A^j_s)_{;j})_{;l}.
\]
Using integration by parts twice (justified as in~\cite{he}),
we conclude the proof of \eqref{toshow}:
\[
	\int_M\left(\Phi^l_kg^{ks}f_l\right)_{,j}A^j_s\,dv_\eta
	= -\int_M \Phi^l_kg^{ks}f_l (A^j_s)_{;j} dv_\eta
	= \int_M f \left( \Phi^l_kg^{ks} (A^j_s)_{;j} \right)_{;l}dv_\eta\qedhere
\]
\end{proof}

Much of the above works in greater generality; instead of a contact manifold, begin with a closed manifold $M$ and closed $2$-form $\omega$ with $\omega^q$ never zero and $\omega^{q+1}=0$. This amounts to a codimension $2q$ foliation $T\scrF=\ker \omega$ with transverse symplectic structure. The argument of Proposition~\ref{lem:kaehler-identities} gives K\"ahler identities on basic forms. One may then define a Fr\'echet space $\mathcal{AC}(\omega)$ of $\omega$-compatible transverse almost complex structures $J$, which has a K\"ahler structure. Combining $J$ with $\omega$ yields a bundle-like metric $g$, so again we obtain a transverse Levi-Civita connection $D^\T$ and Hermitian connection and corresponding scalar curvatures.

Introducing the variation of connection $\dot\alpha$ in direction $A\in T_J \mathcal{AC}(\omega)$ in the standard way (see~\cite[Section~9.5]{gau}) defines, using that the connection is basic (see~\cite[Proposition~3.6]{tondeur}), a basic $1$-form. The Mohsen formula $2\dot\alpha = g(\delta A, \cdot)$ for $\delta A=-\sum i_{e_k} D^\T_{e_k} A$, summing over an orthonormal frame $e_k$ of the normal bundle, can be established using the argument of~\cite[Proposition~9.5.1]{gau}. The variation $\dot\alpha$ being basic, the K\"ahler identities then give $\bar{s}_J^\T = -\delta J (\delta A)^\flat$.

It is not hard to define a transverse Hamiltonian group that acts on $\mathcal{AC}(\omega)$. As in Theorem~\ref{moment_map}, this action is Hamiltonian with moment map $J\mapsto \bar{s}^\T_J$.

\begin{defn}
The square-norm of the moment map defines a functional
\begin{equation}\label{square_norm}
\mathfrak{C}\colon \scrK_\eta \rightarrow \mathbb{R},\quad
\mathfrak{C}(\Phi) = \|\mu(\Phi)\|^2=\int_M ( \bar{s}^\T_\Phi)^2\,dv_\eta.
\end{equation}
The critical points of this functional are called {\it{extremal $K$-contact metrics}}.
\end{defn}

Given a $K$-contact structure $(\eta,\xi,\Phi,g)$, we denote by $X_{\bar{s}^\T} \in \Lie\Con(M,\eta)$ the vector field belonging via \eqref{contact-hamiltonian} to the scalar curvature $\bar{s}_\Phi^\T$.\medskip

\begin{prop}~\label{prop-critical-points}
$\Phi$ is extremal if and only if $X_{\bar{s}^\T}$
is a Killing vector field with respect to the metric $g_\Phi$ induced by $\Phi$ \textup(equivalently, when $\frakL_{X_{\bar{s}^\T}} \Phi = 0$\textup).
\end{prop}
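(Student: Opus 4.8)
The plan is to invoke the standard moment-map principle: critical points of the square-norm of a moment map $\mu$ are exactly the points where the infinitesimal action of the element $\mu(\Phi)^\sharp$ (dual via the given inner product) vanishes. Concretely, since $\mathfrak{C}(\Phi) = \|\mu(\Phi)\|^2$, for a tangent vector $A \in T_\Phi\scrK_\eta$ we differentiate to get $d\mathfrak{C}_\Phi(A) = 2\langle \mu(\Phi), d\mu_\Phi(A)\rangle$. By the moment-map defining equation of Theorem~\ref{moment_map}, $d\mu_\Phi(A)(f) = \mathbf{\Omega}_\Phi(\widehat{X_f}(\Phi), A)$, where $\widehat{X_f}(\Phi) = -\frakL_{X_f}\Phi$ is the infinitesimal action. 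Taking $f = \bar{s}^\T_\Phi$ (so that $X_f = X_{\bar{s}^\T}$, using the identification \eqref{contact-hamiltonian} and the fact that the inner product on $\Lie\Con(M,\eta)$ is the $L^2$-product transported from $C^\infty_B(M)$), we obtain $d\mathfrak{C}_\Phi(A) = -2\,\mathbf{\Omega}_\Phi(\frakL_{X_{\bar{s}^\T}}\Phi, A)$. Hence $\Phi$ is critical if and only if $\mathbf{\Omega}_\Phi(\frakL_{X_{\bar{s}^\T}}\Phi, A) = 0$ for all $A \in T_\Phi\scrK_\eta$.

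The next step is to upgrade this weak vanishing to the pointwise statement $\frakL_{X_{\bar{s}^\T}}\Phi = 0$. For this one checks that $\frakL_{X_{\bar{s}^\T}}\Phi$ itself lies in $T_\Phi\scrK_\eta$ — indeed, it is the image under the infinitesimal action, and the action of $\Con(M,\eta)$ preserves $\scrK_\eta$, so its derivative lands in the tangent space; concretely, $\frakL_{X_{\bar{s}^\T}}\Phi$ anticommutes with $\Phi$, kills $\xi$, is $\xi$-invariant, and is $d\eta$-skew. Then, using that $\mathbf{J}_\Phi A = \Phi \circ A$ is $\mathbf{\Omega}_\Phi$-compatible, we have $\mathbf{\Omega}_\Phi(B, \mathbf{J}_\Phi B) = \int_M \mathrm{trace}(\Phi \circ B \circ \Phi \circ B)\,dv_\eta > 0$ for $B \neq 0$ (this is the positive-definiteness of the associated metric $\mathbf{\Omega}_\Phi(\cdot, \mathbf{J}_\Phi\cdot)$; in coordinates $\Phi^s_i B^i_j \Phi^t_k B^k_s$ contracted with $g^\T$ against itself is a sum of squares). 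Applying this with $B = \frakL_{X_{\bar{s}^\T}}\Phi$ and $A = \mathbf{J}_\Phi B = \Phi \circ \frakL_{X_{\bar{s}^\T}}\Phi$ forces $\frakL_{X_{\bar{s}^\T}}\Phi = 0$.

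Finally I would record the equivalence with $X_{\bar{s}^\T}$ being Killing. One direction is immediate: if $X_{\bar{s}^\T}$ is Killing then $\frakL_{X_{\bar{s}^\T}} g_\Phi = 0$, and since it already preserves $\eta$ (being in $\Lie\Con(M,\eta)$) and $\xi$ (which is determined by $\eta$), the formula \eqref{metric-eta-Phi} expressing $g_\Phi$ through $d\eta$, $\eta$, and $\Phi$ shows that $\frakL_{X_{\bar{s}^\T}}\Phi = 0$ as well — more carefully, $\frakL_{X_{\bar{s}^\T}}(d\eta(\cdot, \Phi\cdot)) = d\eta(\cdot, (\frakL_{X_{\bar{s}^\T}}\Phi)\cdot)$ must vanish, and non-degeneracy of $d\eta$ on $\ker\eta$ together with $(\frakL_{X_{\bar{s}^\T}}\Phi)\xi = 0$ gives $\frakL_{X_{\bar{s}^\T}}\Phi = 0$. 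Conversely, if $\frakL_{X_{\bar{s}^\T}}\Phi = 0$, then since $X_{\bar{s}^\T} \in \Lie\Con(M,\eta)$ preserves $\eta$ and hence $d\eta$ and $\xi$, formula \eqref{metric-eta-Phi} shows $\frakL_{X_{\bar{s}^\T}} g_\Phi = 0$, i.e.\ $X_{\bar{s}^\T}$ is Killing.

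\textbf{Main obstacle.} The genuinely delicate point is the first step — differentiating $\mathfrak{C}$ and correctly matching the inner product on $\Lie\Con(M,\eta)$ (transported from $L^2(M, dv_\eta)$ via \eqref{contact-hamiltonian}) against the duality pairing used to define $\mu$, so that the element dual to $\mu(\Phi)$ is precisely $X_{\bar{s}^\T}$. Everything after that is the formal moment-map argument plus routine verifications that $\frakL_{X_{\bar{s}^\T}}\Phi$ is a tangent vector and that $\mathbf{\Omega}_\Phi(\cdot, \mathbf{J}_\Phi\cdot)$ is positive-definite; I would keep those brief and refer to the coordinate computations already set up in the proof of Theorem~\ref{moment_map}.
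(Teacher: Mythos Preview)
Your proposal is correct and follows the same moment-map argument as the paper: differentiate $\mathfrak{C}$, invoke Theorem~\ref{moment_map} to rewrite the result as $2\mathbf{\Omega}_\Phi(-\frakL_{X_{\bar{s}^\T}}\Phi, A)$, and conclude. The paper's proof is a one-liner that cites \cite{apo-dra,lej-1} for the routine steps; your version simply spells out what those references cover --- that $\frakL_{X_{\bar{s}^\T}}\Phi$ lies in $T_\Phi\scrK_\eta$, that the K\"ahler metric $\mathbf{\Omega}_\Phi(\cdot,\mathbf{J}_\Phi\cdot)$ is positive-definite, and the equivalence between Killing and $\frakL_X\Phi=0$ --- so there is no substantive difference in strategy.
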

\begin{proof}
This follows from the moment map set-up (see~\cite{apo-dra,lej-1} in the case of extremal almost-K\"ahler metrics).
For $A\in T_\Phi \scrK_\eta$ the differential of \eqref{square_norm} in direction $A$ is
\[
	\frak{C}_{*,\Phi}(A) = 2\langle \mu_{*,\Phi}(A), \mu(\Phi)\rangle
	=2d \mu^{\bar{s}^\T_\Phi}(A) = 2\mathbf{\Omega}_\Phi(-\frakL_{X_{\bar{s}^\T_\Phi}}\Phi, A),
\]
where we write $\mu^f = \mu(\cdot)(f)$. The last equality is by Theorem~\ref{moment_map}.
\end{proof}
\begin{ex}
Calabi's extremal problem~\cite{cal-1,cal-2} was extended to Sasaki geometry by Boyer--Galicki--Simanca in~\cite{bo-gal-sim,bo-gal-sim-1} where they introduce the notion of {\it{extremal Sasakian metrics}}. This notion generalizes {\it{Sasaki-Einstein metrics}}
(more generally the so-called $\eta$-Einstein metrics, see for instance~\cite{spa})
and constant scalar curvature Sasaki metrics. 
From Proposition~\ref{prop-critical-points}, extremal Sasakian metrics $\Phi$ are extremal $K$-contact metrics. Indeed, when $\Phi$ is Sasakian the  Riemannian scalar curvature coincides with the Hermitian scalar curvature $\bar{s}^\T$. 
\end{ex}

\begin{rem}
Extremal $K$-contact metrics are a natural extension of extremal Sasakian metrics~\cite{bo-gal-sim,bo-gal-sim-1} to $K$-contact geometry.
Given a background Sasakian structure $(\eta,\xi,\Phi,g),$ Boyer--Galicki--Simanca consider the space $\mathcal{S}(\xi,J)$, of Sasakian structures with
common Reeb vector field $\xi$ and transverse {\it{integrable}} almost-complex structure $J=\Phi^\T$, arising from deforming the contact form $\eta$ by
$\eta\mapsto\eta_t=\eta+t\alpha$, where $\alpha$ is a basic $1$-form with respect to the characteristic foliation $\mathcal{F}_\xi$.
Hence, by {\it {Gray's Stability Theorem}} (see~\cite{gra} or for instance~\cite[p. 190]{bo-ga}), there exist
a diffeomorphism $\gamma$ such that $\gamma^\ast\eta'=\eta,$ for any $(\eta',\xi,\Phi',g')\in\mathcal{S}(\xi,J)$.
This gives a new Sasakian structure $(\eta,\xi,\gamma_\ast^{-1}\Phi'\gamma_\ast,\gamma^\ast g')$ with $\gamma_\ast^{-1}\Phi'\gamma_\ast\in\mathcal{K}_\eta$.
\end{rem}

\section{A $K$-contact Futaki invariant}\label{sec5}

We continue to draw consequences of Theorem~\ref{moment_map}.
In this section, we generalize the Futaki invariant from~\cite{bo-gal-sim}, \cite{fut} to the non-integrable $K$-contact setting. Fix throughout a $(2n+1)$-dimensional compact contact manifold $(M,\eta)$ with volume form $dv_\eta={(2n)!}^{-1}\eta\wedge\left(d\eta\right)^{2n}$. Moreover, we shall assume $\scrK_\eta \neq \emptyset$. In this case we may find by \cite[Corollary~4.14]{boyer-torus} a maximal torus $G \subset \Con(M,\eta)$ in the \emph{strict} contactomorphism group of \emph{Reeb type}, meaning $\xi \in \Lie(G)$. Note that in $\Con(M,\eta)$ different maximal tori are not necessarily conjugate.

Since $G$ is compact, an averaging argument shows that the subspace $\scrK_\eta^G \subset \scrK_\eta$ of $G$-invariant $K$-contact structures is contractible as well.\smallskip

Let $\Pi^G$ be the orthogonal projection from $C^\infty_{B}(M)$, the space of basic functions, onto $\mathfrak{g}_\eta$ the contact Hamiltonians of $\Lie(G)$, recalling the identification~\eqref{contact-hamiltonian}.\smallskip

As a generalization of \cite{sim} in the K\"ahler case we find:

\begin{prop}\label{independ_proj}
For every smooth curve $\Phi_t \in\mathcal{K}_\eta^G$ the projection of the Hermitian scalar curvature $\Pi^G (\bar{s}_{\Phi_t}^\T) \in\mathfrak{g}_\eta$ is independent of $t$.
\end{prop}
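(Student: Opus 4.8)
The plan is to differentiate the projection $\Pi^G(\bar s^\T_{\Phi_t})$ in $t$ and show the derivative vanishes, using the moment map property of Theorem~\ref{moment_map}. Write $A = \dot\Phi_t \in T_{\Phi_t}\scrK^G_\eta$ and let $Q(A) = \dot{\bar s}^\T$ be the linearization of the Hermitian scalar curvature along $A$. Since $\Pi^G$ is a fixed orthogonal projection onto the finite-dimensional subspace $\mathfrak{g}_\eta \subset C^\infty_B(M)$, we have $\frac{d}{dt}\Pi^G(\bar s^\T_{\Phi_t}) = \Pi^G(Q(A))$, and this element of $\mathfrak{g}_\eta$ is characterized by its $L^2(M,dv_\eta)$-pairing against all $h \in \mathfrak{g}_\eta$: namely $\langle \Pi^G(Q(A)), h\rangle = \int_M Q(A)\, h\, dv_\eta$. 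So it suffices to show $\int_M Q(A)\, h\, dv_\eta = 0$ for every $h \in \mathfrak{g}_\eta$.

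Here is where Theorem~\ref{moment_map} enters. By that theorem, for any $h \in C^\infty_B(M)$ with associated contact vector field $X_h$,
\[
\int_M Q(A)\, h\, dv_\eta = -\mathbf{\Omega}_{\Phi_t}(-\frakL_{X_h}\Phi_t, A) = \mathbf{\Omega}_{\Phi_t}(\frakL_{X_h}\Phi_t, A).
\]
Now take $h \in \mathfrak{g}_\eta = \Lie(G)$. Since the curve $\Phi_t$ lies in $\scrK^G_\eta$, it is fixed by the $G$-action, hence invariant under the flow of $X_h$; infinitesimally this says $\frakL_{X_h}\Phi_t = 0$. Therefore the right-hand side vanishes identically, giving $\int_M Q(A)\, h\, dv_\eta = 0$ for all $h \in \mathfrak{g}_\eta$, hence $\Pi^G(Q(A)) = 0$, hence $\frac{d}{dt}\Pi^G(\bar s^\T_{\Phi_t}) = 0$. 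Since $t$ was arbitrary along the smooth curve, $\Pi^G(\bar s^\T_{\Phi_t})$ is constant.

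The main point to be careful about — the only genuine obstacle — is the bookkeeping that $\Phi_t \in \scrK^G_\eta$ really forces $\frakL_{X_h}\Phi_t = 0$ for $h \in \mathfrak{g}_\eta$, together with the fact that $G$ being of Reeb type (so $\xi \in \Lie(G)$) means the contact vector fields $X_h$ for $h \in \mathfrak{g}_\eta$ are exactly the infinitesimal generators of the $G$-action on $\scrK_\eta$ used in Theorem~\ref{moment_map}; one should also note $X_h$ is a genuine element of $\Lie\Con(M,\eta)$ so that the theorem applies verbatim. Everything else is formal: the interchange of $\frac{d}{dt}$ with the continuous linear map $\Pi^G$, and the identification of an element of the finite-dimensional inner-product space $\mathfrak{g}_\eta$ by its pairings. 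No local coordinate computation is needed beyond what is already packaged in Theorem~\ref{moment_map}.
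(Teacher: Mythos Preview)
Your proof is correct and follows essentially the same approach as the paper's own proof. The paper phrases the argument directly in terms of the moment map, showing $\frac{d}{dt}\mu(\Phi_t)(X)=\mathbf{\Omega}_{\Phi_{t_0}}(\dot\Phi,\widehat{X}(\Phi_{t_0}))=0$ since the infinitesimal action $\widehat{X}(\Phi_{t_0})=-\frakL_{X}\Phi_{t_0}$ vanishes by $G$-invariance; your version unpacks the projection $\Pi^G$ and the pairing explicitly, but the content is identical.
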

\begin{proof}
We may equivalently show that $\mu|_{\mathcal{K}_\eta^G}(\Phi_t)(X)$ is constant for any $X \in \Lie(G)$:
\begin{align*}
\left.\frac{d}{dt}\right|_{t_0} \mu(\Phi_t)(X) &= d\mu(\dot{\Phi}(0))(X)={\bf\Omega}_{\Phi_{t_0}}(\dot{\Phi}(0),\widehat{X}\left({\Phi_{t_0}}\right))=0,
\end{align*}
using that the infinitesimal action $\widehat{X}\left({\Phi_{t_0}}\right)$ vanishes ($\Phi_{t_0}$ being $G$-invariant).
\end{proof}

\begin{defn}
For fixed $G\subset \Con(M,\eta),$ we define the vector field $\mathrm{Z}^G_\eta\in \Lie(G)$ corresponding to the contact Hamiltonian $\mathrm{z}^G_\eta=\Pi^G \bar{s}^\T_\Phi \in \mathfrak{g}_\eta$, via \eqref{contact-hamiltonian}, using an arbitrary $K$-contact structure $\Phi \in \scrK^G_\eta$.

By Proposition~\ref{independ_proj} and $\scrK_\eta^G \simeq \{\mathrm{pt}\}$, the \emph{extremal vector field} $\mathrm{Z}^G_\eta$ is well-defined (see~\cite{fut-mab} in the K\"ahler case)
\end{defn}

\begin{prop}\label{extreme_cond}
A $K$-contact structure $\Phi\in\scrK_\eta^G$ is extremal precisely when
\begin{equation}\label{extreme-cond-proj}
	\bar{s}^\T_\Phi = \mathrm{z}^G_\eta.
\end{equation}
\end{prop}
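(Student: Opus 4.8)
The plan is to characterize extremality via Proposition~\ref{prop-critical-points}: $\Phi \in \scrK_\eta^G$ is extremal if and only if $\frakL_{X_{\bar{s}^\T_\Phi}}\Phi = 0$, i.e.\ the contact Hamiltonian vector field attached to $\bar{s}^\T_\Phi$ lies in $\Lie \Con(M,\eta)$ and preserves $\Phi$. One direction is immediate: if \eqref{extreme-cond-proj} holds then $\bar{s}^\T_\Phi = \mathrm{z}^G_\eta \in \mathfrak{g}_\eta$, so $X_{\bar{s}^\T_\Phi} = \mathrm{Z}^G_\eta \in \Lie(G)$; since $\Phi \in \scrK^G_\eta$ is $G$-invariant, $\frakL_{\mathrm{Z}^G_\eta}\Phi = 0$, hence $\Phi$ is extremal.

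For the converse, suppose $\Phi$ is extremal, so $\frakL_{X_{\bar{s}^\T_\Phi}}\Phi = 0$. Write $f = \bar{s}^\T_\Phi \in C^\infty_B(M)$ and decompose $f = \Pi^G f + f^\perp = \mathrm{z}^G_\eta + f^\perp$ with $f^\perp$ orthogonal to $\mathfrak{g}_\eta$ in $L^2(M,dv_\eta)$. Correspondingly $X_f = \mathrm{Z}^G_\eta + X_{f^\perp}$ under the identification~\eqref{contact-hamiltonian}, which is a Lie algebra (anti)homomorphism onto the Hamiltonian vector fields. Since $\mathrm{Z}^G_\eta$ is in the centre of the abelian algebra $\Lie(G)$ and already preserves $\Phi$, the extremality condition gives $\frakL_{X_{f^\perp}}\Phi = 0$ as well, so $X_{f^\perp}$ is itself an infinitesimal automorphism of the $K$-contact structure. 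The goal is to show $f^\perp \equiv 0$. The key step is to exploit that the one-parameter group $\phi_t = \exp(tX_{f^\perp}) \subset \Con(M,\eta)$ preserves $\Phi$, hence the metric $g_\Phi$, so it generates a torus (its closure) in the isometry group intersected with $\Con(M,\eta)$; combined with $\xi \in \Lie(G)$ and the maximality of $G$ among tori of Reeb type, one argues that $X_{f^\perp}$ must lie in $\Lie(G)$. Indeed $X_{f^\perp}$ commutes with $G$ (its Hamiltonian $f^\perp$ is $G$-invariant as $\Phi$ is, and the bracket corresponds to the Poisson-type bracket of invariant functions), so $G \cup \{\phi_t\}$ generates a connected abelian subgroup of $\Con(M,\eta)$ consisting of isometries of $g_\Phi$ when restricted to the relevant torus; its closure is a torus of Reeb type containing $G$, forcing it to equal $G$ by maximality. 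Then $X_{f^\perp} \in \Lie(G)$, i.e.\ $f^\perp \in \mathfrak{g}_\eta$; but $f^\perp \perp \mathfrak{g}_\eta$, so $f^\perp = 0$ and \eqref{extreme-cond-proj} follows.

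The main obstacle is the compactness/torus argument establishing $X_{f^\perp} \in \Lie(G)$: one must pass from a single vector field preserving $\Phi$ (equivalently lying in the isometry algebra of $g_\Phi$ and in $\Lie\Con(M,\eta)$) to membership in the fixed maximal torus $G$. The subtlety, flagged already in Section~\ref{sec5} (``different maximal tori are not necessarily conjugate''), is that one cannot simply conjugate into $G$; instead one uses that $X_{f^\perp}$ commutes with all of $\Lie(G)$—so that $\Lie(G) \oplus \mathbb{R}X_{f^\perp}$ is abelian and of Reeb type (it contains $\xi$)—and invokes maximality of $G$ directly. I would cite the Sasakian analogue in~\cite{bo-gal-sim-1} (and the torus structure from~\cite{boyer-torus}) for this step; the remaining computations are the routine linear algebra of the $L^2$-orthogonal decomposition and the compatibility of~\eqref{contact-hamiltonian} with brackets.
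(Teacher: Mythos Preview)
Your proof is correct and follows essentially the same line as the paper's: both directions rest on Proposition~\ref{prop-critical-points}, and the nontrivial implication is handled by the same torus-closure argument inside $\mathrm{Isom}(M,g_\Phi)\cap\Con(M,\eta)$ combined with maximality of $G$. The only difference is cosmetic: the paper applies the argument directly to $X_{\bar{s}^\T}$ (showing $\Lie(G)+\mathbb{R}\,X_{\bar{s}^\T}$ is abelian, its closure is a torus containing $G$, hence equals $G$, so $\bar{s}^\T\in\mathfrak{g}_\eta$ and therefore $\bar{s}^\T=\Pi^G\bar{s}^\T=\mathrm{z}^G_\eta$), whereas you first decompose $\bar{s}^\T=\mathrm{z}^G_\eta+f^\perp$ and run the same argument on $X_{f^\perp}$; the preliminary decomposition is not needed, but it does no harm.
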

\begin{proof}
Suppose $(\eta,\xi,\Phi,g)$ is extremal. By Proposition~\ref{prop-critical-points}, $X_{\bar{s}^\T}$ is a $G$-invariant Killing field.
$\Phi$ being $G$-invariant, $G$ is a subgroup of the isometry group for $(M,g_\Phi)$. Consider the connected Lie subgroup $H \subset \mathrm{Isom}(M,g)$ belonging to the abelian subalgebra $\Lie(G) + \mathbb{R}\cdot X_{\bar{s}^\T}$. The closure $\overline{H}$ is a torus in ${\mathrm{Isom}(M,g) \cap \Con(M,\eta)}$ (and also in $\Con(M,\eta)$), containing $G$. By maximality, $G \subset H=\overline{H} \subset G$, so $X_{\bar{s}^\T} \in \Lie(G)$ and \eqref{extreme-cond-proj} follows.
Conversely, from \eqref{extreme-cond-proj} we have $\bar{s}^\T \in \Lie(G) \subset \Lie (\mathrm{Isom}(M,g))$ so $X_{\bar{s}^\T}$ is a Killing field and $\Phi$ is extremal by Proposition~\ref{prop-critical-points}.
\end{proof}

Consider the `angle' map $\langle\,\cdot\,, \mathrm{Z}^G_\eta\rangle$ on $\Lie(G)$. If $(M,\eta)$ admits an extremal metric $\Phi$, then by the previous proposition the angle map completely determines its scalar curvature $\bar{s}^\T_\Phi$. A more explicit definition of this map is as follows:

\begin{defn}
The {\it{$K$-contact Futaki invariant}} relative to the group $G$ is the map
\[
\mathfrak{F}_{\mathcal{K}_\eta^G}\colon \Lie(G) \rightarrow \mathbb{R},\qquad X\mapsto \int_M \eta(X) \mathring{\bar{s}}^\T dv_\eta,
\]
where $\mathring{\bar{s}}^\T =\bar{s}_\Phi^\T-\frac{\int_M \bar{s}_\Phi^\T dv_\eta}{\int_Mdv_\eta}$ is the zero integral part (for $\Phi\in\scrK_\eta^G$ arbitrary).
\end{defn}

The previous discussion implies (see~\cite[Proposition 5.2]{bo-gal-sim} in the Sasakian case):

\begin{prop}
If $(M,\eta)$ admits an extremal $K$-contact metric, the following are equivalent:
\begin{enumerate}
\item
Every \textup(some\textup) extremal metric has constant Hermitian scalar curvature.
\item
$\mathfrak{F}_{\mathcal{K}_\eta^G}\equiv 0$.
\end{enumerate}
\end{prop}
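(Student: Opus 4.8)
The plan is to deduce this directly from Proposition~\ref{extreme_cond} and the definition of the $K$-contact Futaki invariant, which is why ``the previous discussion implies'' is the appropriate phrasing. First I would fix an extremal $K$-contact metric $\Phi\in\scrK_\eta^G$ (this exists by hypothesis). By Proposition~\ref{extreme_cond}, extremality of $\Phi$ is equivalent to $\bar{s}^\T_\Phi = \mathrm{z}^G_\eta = \Pi^G\bar{s}^\T_\Phi$, i.e.\ the Hermitian scalar curvature already lies in $\mathfrak{g}_\eta = \Lie(G)$ (under the identification~\eqref{contact-hamiltonian}). I would first record that by Proposition~\ref{independ_proj} and $\scrK_\eta^G\simeq\{\mathrm{pt}\}$ both the extremal vector field $\mathrm{Z}^G_\eta$ and the Futaki invariant $\mathfrak{F}_{\mathcal{K}_\eta^G}$ are independent of the chosen $\Phi\in\scrK_\eta^G$; and that by Proposition~\ref{extreme_cond} \emph{every} extremal metric has $\bar{s}^\T_\Phi=\mathrm{z}^G_\eta$, so ``every'' and ``some'' in item (1) are equivalent — hence it suffices to show that $\mathrm{z}^G_\eta$ is constant if and only if $\mathfrak{F}_{\mathcal{K}_\eta^G}\equiv 0$.

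The core computation is to re-express $\mathfrak{F}_{\mathcal{K}_\eta^G}(X)$ in terms of $\mathrm{Z}^G_\eta$. For $X\in\Lie(G)$ with contact Hamiltonian $h = \eta(X)\in\mathfrak{g}_\eta$, and writing $c = \bigl(\int_M dv_\eta\bigr)^{-1}\int_M \bar{s}^\T_\Phi\,dv_\eta$ for the average, I would compute
\[
\mathfrak{F}_{\mathcal{K}_\eta^G}(X) = \int_M h\,\bar{s}^\T_\Phi\,dv_\eta - c\int_M h\,dv_\eta
= \langle h, \bar{s}^\T_\Phi\rangle_{L^2} - c\,\langle h, 1\rangle_{L^2}.
\]
Since $h\in\mathfrak{g}_\eta$ and $\Pi^G$ is the orthogonal projection onto $\mathfrak{g}_\eta$, we have $\langle h,\bar{s}^\T_\Phi\rangle_{L^2} = \langle h,\Pi^G\bar{s}^\T_\Phi\rangle_{L^2} = \langle h,\mathrm{z}^G_\eta\rangle_{L^2} = \langle X, \mathrm{Z}^G_\eta\rangle$ (here I would use that the constant function $1$ is the Hamiltonian of $\xi\in\Lie(G)$, so $1\in\mathfrak{g}_\eta$ and the same projection identity applies to $\langle h,1\rangle$ trivially). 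This realizes $\mathfrak{F}_{\mathcal{K}_\eta^G}$ as (an affine-linear modification of) the angle map $X\mapsto\langle X,\mathrm{Z}^G_\eta\rangle$, in fact $\mathfrak{F}_{\mathcal{K}_\eta^G}(X) = \langle h - \Pi^G c, \mathrm{z}^G_\eta\rangle_{L^2}$ after absorbing the constant, i.e.\ it measures the component of $\mathring{\bar{s}}^\T = \mathrm{z}^G_\eta - c$ against all of $\mathfrak{g}_\eta$.

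Finally I would close the equivalence. If some (equivalently every) extremal metric has constant Hermitian scalar curvature, then $\bar{s}^\T_\Phi = c$ is constant, so $\mathring{\bar{s}}^\T = 0$ and $\mathfrak{F}_{\mathcal{K}_\eta^G}(X) = \int_M \eta(X)\cdot 0\,dv_\eta = 0$ for all $X$. Conversely, suppose $\mathfrak{F}_{\mathcal{K}_\eta^G}\equiv 0$. By extremality and Proposition~\ref{extreme_cond}, $\mathring{\bar{s}}^\T = \mathrm{z}^G_\eta - c$ lies in $\mathfrak{g}_\eta$ (since $\mathrm{z}^G_\eta\in\mathfrak{g}_\eta$ and the constant $c$ is a multiple of the Hamiltonian $1$ of $\xi\in\Lie(G)$). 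But the vanishing of the Futaki invariant says precisely that $\mathring{\bar{s}}^\T$ is $L^2$-orthogonal to every element of $\mathfrak{g}_\eta$ — taking $X$ to range over a basis of $\Lie(G)$ in the pairing $\int_M \eta(X)\mathring{\bar{s}}^\T dv_\eta = \langle h,\mathring{\bar{s}}^\T\rangle_{L^2}$. An element of $\mathfrak{g}_\eta$ orthogonal to $\mathfrak{g}_\eta$ is zero, hence $\mathring{\bar{s}}^\T = 0$, i.e.\ $\bar{s}^\T_\Phi = c$ is constant; by Proposition~\ref{extreme_cond} this holds for every extremal metric. The main (and only real) obstacle is the bookkeeping of the two normalizations — making sure the ``zero integral'' subtraction in $\mathring{\bar{s}}^\T$ is compatible with the fact that the constants sit inside $\mathfrak{g}_\eta$ as multiples of the Reeb Hamiltonian, so that orthogonality to $\mathfrak{g}_\eta$ together with membership in $\mathfrak{g}_\eta$ genuinely forces $\mathring{\bar{s}}^\T=0$; everything else is formal.
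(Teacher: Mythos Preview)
Your proof is correct and follows exactly the line the paper intends by ``the previous discussion implies'': use Proposition~\ref{extreme_cond} to identify $\bar{s}^\T_\Phi$ with $\mathrm{z}^G_\eta$, note that constants lie in $\mathfrak{g}_\eta$ because $\xi\in\Lie(G)$, and then read off the equivalence from the definition of $\mathfrak{F}_{\mathcal{K}_\eta^G}$ as the $L^2$-pairing of $\mathring{\bar{s}}^\T$ against $\mathfrak{g}_\eta$. The only point worth flagging is that you tacitly assume the given extremal metric lies in $\scrK_\eta^G$; the paper makes the same implicit assumption (Proposition~\ref{extreme_cond} is stated only for $G$-invariant $\Phi$), so this is not a defect of your argument relative to the paper.
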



\begin{prop}
The vector field $\mathrm{Z}^G_\eta$ is invariant under $G$-invariant {\it{strict contact isotopy}} of $\eta$.
\end{prop}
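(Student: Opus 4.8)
The plan is to show that a strict contact isotopy acts on all the structures appearing in the definition of $\mathrm{Z}^G_\eta$ in a compatible way, so that the extremal vector field transforms naturally and is therefore preserved. Concretely, let $\gamma_t$ be a path of diffeomorphisms with $\gamma_0 = \id$ and $\gamma_t^*\eta = \eta$, commuting with (or more precisely, the isotopy being through elements that preserve the relevant $G$-action data). First I would note that such $\gamma = \gamma_1$ induces an automorphism of $\Con(M,\eta)$ by conjugation $h \mapsto \gamma h \gamma^{-1}$, and correspondingly an automorphism of the Lie algebra $\Lie\Con(M,\eta) \cong C^\infty_B(M)$ which, under the identification \eqref{contact-hamiltonian}, is just $f \mapsto f\circ\gamma^{-1} = (\gamma^{-1})^*f$ (using $\gamma^*\eta = \eta$ and $\gamma^*dv_\eta = dv_\eta$). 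The point of the hypothesis ``$\mathrm{Z}^G_\eta$ is invariant under $G$-invariant strict contact isotopy'' is that $\gamma$ normalizes $G$ — indeed $G$-invariance of the isotopy means $\gamma_t G \gamma_t^{-1} = G$ — so conjugation by $\gamma$ restricts to an automorphism of $\Lie(G)$ and of $\mathfrak{g}_\eta$.

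Next I would track how $\bar{s}^\T_\Phi$ behaves. Given $\Phi \in \scrK_\eta^G$, the pushed-forward structure $\Phi' = \gamma_* \circ \Phi \circ \gamma_*^{-1}$ again lies in $\scrK_\eta$ (since $\gamma^*\eta = \eta$), and it is $G$-invariant because $\gamma$ normalizes $G$, so $\Phi' \in \scrK_\eta^G$. Because the transverse Hermitian scalar curvature is defined entirely from $\eta$ and $\Phi$ by natural (diffeomorphism-equivariant) constructions — the metric \eqref{metric-eta-Phi}, the transverse Levi-Civita and Hermitian connections, their curvatures and traces — naturality gives $\bar{s}^\T_{\Phi'} = (\gamma^{-1})^*\bar{s}^\T_\Phi = \bar{s}^\T_\Phi \circ \gamma^{-1}$. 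Then I would apply $\Pi^G$ to both sides: since the $L^2$ inner product on $C^\infty_B(M)$ uses $dv_\eta$ which is $\gamma$-invariant, and since $(\gamma^{-1})^*$ maps $\mathfrak{g}_\eta$ to itself (as $\gamma$ normalizes $G$), the projection is equivariant, $\Pi^G((\gamma^{-1})^*\bar{s}^\T_\Phi) = (\gamma^{-1})^*\Pi^G(\bar{s}^\T_\Phi)$. Hence $\mathrm{z}^G_\eta$ computed from $\Phi'$ equals $(\gamma^{-1})^*$ applied to $\mathrm{z}^G_\eta$ computed from $\Phi$; by Proposition~\ref{independ_proj} and contractibility of $\scrK_\eta^G$, the value is independent of which $K$-contact structure we pick, so $(\gamma^{-1})^*\mathrm{z}^G_\eta = \mathrm{z}^G_\eta$, i.e. the contact Hamiltonian of $\mathrm{Z}^G_\eta$ is fixed, equivalently $\gamma_* \mathrm{Z}^G_\eta = \mathrm{Z}^G_\eta$.

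The main obstacle I anticipate is being careful about what ``$G$-invariant strict contact isotopy'' actually means and checking that the hypothesis really does force $\gamma$ to normalize $G$ (rather than merely that each $\gamma_t$ commutes pointwise with $G$, which would be stronger); the proof should be phrased to use exactly the invariance assumption that makes $(\gamma^{-1})^*$ preserve $\mathfrak{g}_\eta$. A secondary routine point is verifying the diffeomorphism-naturality of $\bar{s}^\T$, i.e. that every operator in Subsection on Hermitian curvature ($D^\T$, $\bar\nabla^\T$, $\bar R^\T$, $\Lambda_\omega$) commutes with pullback along a contactomorphism fixing $\eta$ — this is standard since each is characterized by a universal property, but it should be stated. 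Once these are in hand the conclusion is immediate, so I would keep the write-up short, emphasizing the normalization of $G$, naturality of $\bar{s}^\T$, equivariance of $\Pi^G$, and well-definedness of $\mathrm{Z}^G_\eta$ via Proposition~\ref{independ_proj}.
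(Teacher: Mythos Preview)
You have misread the statement. A ``$G$-invariant strict contact isotopy of $\eta$'' here is not a path $\gamma_t$ in $\Con(M,\eta)$ with $\gamma_t^*\eta=\eta$; it is a smooth $G$-invariant family of contact $1$-forms $\eta_t$ (with common Reeb field $\xi$ and $\eta_0=\eta$). The content of the proposition is that the extremal vector field $\mathrm{Z}^G_{\eta_t}$, which a priori depends on the contact form $\eta_t$, is actually constant in $t$. Your argument instead fixes $\eta$ throughout and proves that $\mathrm{Z}^G_\eta$ is preserved by elements of $\Con(M,\eta)$ normalizing $G$ --- a correct but different (and much weaker) assertion, since $\mathrm{Z}^G_\eta$ never changes in your setup.

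The paper's proof does use the naturality you establish, but as only one of two steps. First, Gray's Stability Theorem is invoked to produce diffeomorphisms $\gamma_t$ with $\gamma_t^*\eta_t=\eta$; naturality of all the transverse constructions (exactly your ``secondary routine point'') then gives $\gamma_t^*\big(\mathrm{Z}^G_{\eta_t}\big)=\mathrm{Z}^G_\eta$. Second, the $G$-invariance of the family $\eta_t$ forces the generating vector field of $\gamma_t$ to be $G$-invariant, hence $\gamma_t$ commutes with $G$; since $\mathrm{Z}^G_{\eta_t}\in\Lie(G)$, this yields $\gamma_t^*\big(\mathrm{Z}^G_{\eta_t}\big)=\mathrm{Z}^G_{\eta_t}$. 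Combining the two equalities gives $\mathrm{Z}^G_{\eta_t}=\mathrm{Z}^G_\eta$. So the missing ingredients in your write-up are (i) recognizing that $\eta$ varies, (ii) appealing to Gray stability to manufacture the diffeomorphisms, and (iii) comparing $\mathrm{Z}^G_{\eta_t}$ with $\mathrm{Z}^G_\eta$ rather than $\mathrm{Z}^G_\eta$ with itself.
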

\begin{proof}
Suppose that we have a smooth $G$-invariant family of contact forms $\eta_t$ with the same Reeb vector field $\xi$ (such that $\eta_0=\eta$). Then,
by {{Gray's Stability Theorem}}, there exists
a smooth family of diffeomorphisms $\gamma_t$ such that $\gamma_0=\id$ and $\gamma^\ast_t\eta_t=\eta$. Then, $\gamma^\ast_t\left(\mathrm{Z}^G_{\eta_t}\right)=\mathrm{Z}^G_\eta$.
Moreover, using the $G$-invariance of the vector field generating $\gamma^\ast_t$, we have $\gamma^\ast_t\left(\mathrm{Z}^G_{\eta_t}\right)=\mathrm{Z}^G_{\eta_t}$.
\end{proof}

On a compact contact manifold $(M,\eta)$ consider the space $\mathcal{K}^G(\xi)$ of all $G$-invariant $K$-contact structures with contact forms strictly isotopic to $\eta$ and common Reeb field $\xi$. One easily deduces
the following: if $\mathcal{K}^G(\xi)$ contains a $K$-contact metric with constant transverse Hermitian scalar curvature then $\mathrm{Z}^G_\eta=0$.
Conversely, if $\mathrm{Z}^G_\eta=0,$ any extremal $K$-contact metric in $\mathcal{K}^G(\xi)$ is of constant transverse Hermitian scalar curvature.

Since the Reeb field $\xi$ lies in $\Lie(G)$ it follows that
\[
{\int_Ms^{\nabla^\T}dv_\eta}=\int_M \mathrm{z}^G_\eta dv_\eta
\]
so that
\[
\mathfrak{F}_{\mathcal{K}_\eta^G}(\mathrm{Z}^G_\eta)=\int_M\eta(\mathrm{Z}^G_\eta)\mathring{s^{\nabla^\T}}dv_\eta=\int_M\mathrm{z}^G_\eta\mathring{s^{\nabla^\T}}dv_\eta=\int_M\left(\mathrm{z}^G_\eta\right)^2 dv_\eta-\frac{\left(\int_Ms^{\nabla^\T}dv_\eta\right)^2}{\int_Mdv_\eta}.
\]
We obtain a lower bound for the functional~(\ref{square_norm}):
\begin{prop}
Let $S_\eta=\int_Ms^{\nabla^\T}dv_\eta$ and $V_\eta=\int_Mdv_\eta$. For all $\Phi \in \scrK^G_\eta$ we have
\[
\int_M (s^{\nabla^\T})^2\,dv_\eta\geqslant\,\mathfrak{F}_{\mathcal{K}_\eta^G}(\mathrm{Z}^G_\eta)+\frac{S_\eta^2}{V_\eta}.
\]
Equality holds if and only if $\Phi\in\mathcal{K}_\eta^G$
induces an extremal metric.
\end{prop}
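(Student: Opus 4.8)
The plan is to prove the inequality by an orthogonal-decomposition argument in $L^2(M,dv_\eta)$, exactly mirroring the standard Calabi-type lower bound in the K\"ahler case. First I would decompose the transverse Hermitian scalar curvature $s^{\nabla^\T} = \bar{s}^\T_\Phi$ as an orthogonal sum. Writing $\mathring{s^{\nabla^\T}} = s^{\nabla^\T} - S_\eta/V_\eta$ for its zero-integral part, the Pythagorean identity gives
\[
\int_M (s^{\nabla^\T})^2\,dv_\eta = \frac{S_\eta^2}{V_\eta} + \int_M \big(\mathring{s^{\nabla^\T}}\big)^2\,dv_\eta.
\]
So it suffices to show $\int_M \big(\mathring{s^{\nabla^\T}}\big)^2\,dv_\eta \geqslant \mathfrak{F}_{\mathcal{K}_\eta^G}(\mathrm{Z}^G_\eta)$.

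Next I would decompose $\mathring{s^{\nabla^\T}}$ further using the orthogonal projection $\Pi^G$ onto $\mathfrak{g}_\eta$. Since $\xi \in \Lie(G)$, the constant function $1$ lies in $\mathfrak{g}_\eta$, so $\Pi^G$ preserves the space of zero-integral functions and we may write $\mathring{s^{\nabla^\T}} = \mathring{\mathrm{z}}^G_\eta + r$, where $\mathring{\mathrm{z}}^G_\eta$ is the zero-integral part of the extremal Hamiltonian $\mathrm{z}^G_\eta = \Pi^G \bar{s}^\T_\Phi$ and $r = \mathring{s^{\nabla^\T}} - \mathring{\mathrm{z}}^G_\eta$ is orthogonal to $\mathfrak{g}_\eta$. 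Hence
\[
\int_M \big(\mathring{s^{\nabla^\T}}\big)^2\,dv_\eta = \int_M \big(\mathring{\mathrm{z}}^G_\eta\big)^2\,dv_\eta + \int_M r^2\,dv_\eta \geqslant \int_M \big(\mathring{\mathrm{z}}^G_\eta\big)^2\,dv_\eta.
\]
On the other hand, the computation already displayed before the proposition shows $\mathfrak{F}_{\mathcal{K}_\eta^G}(\mathrm{Z}^G_\eta) = \int_M (\mathrm{z}^G_\eta)^2\,dv_\eta - S_\eta^2/V_\eta = \int_M \big(\mathring{\mathrm{z}}^G_\eta\big)^2\,dv_\eta$, using $\int_M \mathrm{z}^G_\eta\,dv_\eta = S_\eta$. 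Combining the two displays gives the desired inequality. I should double-check that $\int_M \mathrm{z}^G_\eta\,dv_\eta = S_\eta$, which is exactly the identity $\int_M s^{\nabla^\T}\,dv_\eta = \int_M \mathrm{z}^G_\eta\,dv_\eta$ established in the text via $\xi \in \Lie(G)$ (equivalently, $\mathrm{z}^G_\eta$ and $s^{\nabla^\T}$ have the same pairing with the constant $1 \in \mathfrak{g}_\eta$).

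For the equality case, note that equality forces $r = 0$, i.e.\ $\mathring{s^{\nabla^\T}} = \mathring{\mathrm{z}}^G_\eta$, hence $\bar{s}^\T_\Phi = \mathrm{z}^G_\eta$ since both sides have the same integral. By Proposition~\ref{extreme_cond} this is precisely the condition that $\Phi \in \scrK_\eta^G$ is extremal. Conversely, if $\Phi$ induces an extremal metric, Proposition~\ref{extreme_cond} gives $\bar{s}^\T_\Phi = \mathrm{z}^G_\eta$, so $r = 0$ and equality holds throughout. I do not anticipate a serious obstacle here: the only subtle point is making sure the chain of orthogonal decompositions is set up so that $1$, the zero-integral complement, and the orthogonal complement of $\mathfrak{g}_\eta$ fit together correctly, which works precisely because $\xi \in \Lie(G)$ guarantees $1 \in \mathfrak{g}_\eta$; everything else is the Pythagorean theorem together with the already-derived formula for $\mathfrak{F}_{\mathcal{K}_\eta^G}(\mathrm{Z}^G_\eta)$.
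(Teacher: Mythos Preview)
Your proposal is correct and follows essentially the same approach as the paper. The paper's proof is a one-liner (``the inequality follows from the above discussion''), relying on the displayed identity $\mathfrak{F}_{\mathcal{K}_\eta^G}(\mathrm{Z}^G_\eta)=\int_M(\mathrm{z}^G_\eta)^2\,dv_\eta - S_\eta^2/V_\eta$ together with the projection inequality $\int_M(s^{\nabla^\T})^2\,dv_\eta \geq \int_M(\mathrm{z}^G_\eta)^2\,dv_\eta$; you have simply unpacked this same orthogonal-projection argument in two Pythagorean steps (first splitting off the constants, then projecting the zero-integral part onto $\mathfrak{g}_\eta$), and the equality case via Proposition~\ref{extreme_cond} is identical.
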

\begin{proof}
The inequality follows from the above discussion. Moreover, equality
holds if and only if $s^{\nabla^\T}=\mathrm{z}^G_\eta$, \emph{i.e.}~by Proposition~\ref{extreme_cond} when $\Phi$ is extremal. 
\end{proof} 

\section{Deformations of Extremal $K$-contact Metrics in Dimension $5$}\label{deform_section}

In the Sasakian setting, Boyer-Galicki-Simanca developed the notion of {\it{Sasaki cone}}~\cite{bo-gal-sim,bo-gal-sim-1} and
proved in~\cite{bo-gal-sim} that the existence of extremal Sasakian metrics is an open condition in the Sasaki cone, as in the K\"ahler set-up~\cite{leb-sim,sim-1,fuj-sch}.

In this section, we show that a similar result holds in the semi-Sasakian case. Let $\scrK^{G,\mathrm{semi}}_\eta$ be the subspace of $\scrK^G_\eta$ of those $\Phi$ that are semi-Sasakian (see Definition~\ref{def:semiSasakian}).

\begin{thm}\label{openess}
Let $(M,\eta)$ be a $5$-dimensional compact contact manifold and $G$
be a maximal torus in $\Con(M,\eta)$. Let $\Phi_t$ be a smooth curve in $\scrK^{G,\mathrm{semi}}_\eta$ with $\Phi_0$ an extremal Sasakian metric. Then there exists a smooth curve $\overline\Phi_t$ of $G$-invariant extremal $K$-contact metrics with $\Phi_0=\overline\Phi_0$ and $\overline\Phi_t$ diffeomorphic to $\Phi_t$.
\end{thm}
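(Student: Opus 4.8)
The plan is to run the standard moment-map/implicit-function-theorem argument for openness of extremality (as in \cite{leb-sim,sim-1,fuj-sch} in the K\"ahler case and \cite{bo-gal-sim} in the Sasakian case), carried out transversally and equivariantly. First I would set up the deformation space: by Theorem~\ref{space_of_deformation}, for each $t$ the space of $G$-invariant $K$-contact structures with Reeb field $\xi$ and transverse structure $J_t = \Phi_t^\T$ is parametrized (modulo the $H^1$-factor, which can be absorbed or frozen by the $G$-invariance/strict-isotopy normalization) by a neighbourhood of $0$ in $C^\infty_{B,0}(M)^G$; write $\eta_\varphi = \eta_t + \mathbb{G}_B d^\c_B\varphi + \beta$ and let $\Phi_{t,\varphi}$ be the corresponding $K$-contact structure. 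Since $\Phi_0$ is extremal Sasakian, Proposition~\ref{extreme_cond} gives $\bar s^\T_{\Phi_0} = \mathrm{z}^G_\eta$, and by Proposition~\ref{prop-critical-points} extremality of $\Phi_{t,\varphi}$ is equivalent to $\bar s^\T_{\Phi_{t,\varphi}} - \mathrm{z}^G_\eta$ being $L^2$-orthogonal to $\mathfrak{g}_\eta^\perp$, i.e.\ to the vanishing of
\[
F(t,\varphi) = (\mathrm{id} - \Pi^G)\big(\bar s^\T_{\Phi_{t,\varphi}}\big) \in C^\infty_{B,0}(M)^G \ominus \mathfrak g_\eta.
\]
We have $F(0,0)=0$, and the implicit function theorem (in suitable H\"older or Sobolev completions, then elliptic bootstrapping for smoothness) will produce a smooth curve $\varphi(t)$ with $F(t,\varphi(t))=0$ provided the partial derivative $D_\varphi F(0,0)$ is an isomorphism onto its target.

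Second I would compute and invert that linearization. The derivative of $\varphi \mapsto \bar s^\T_{\Phi_{0,\varphi}}$ at $\varphi=0$ is, by the moment-map formalism (the differential of $\mu$ composed with the K\"ahler structure $\mathbf J$), essentially the Lichnerowicz-type operator: up to lower-order terms it is $\mathcal L_0 \varphi = -\delta_B J\, \delta_B\, \text{(something)}\,\varphi$, which in the Sasakian case $\Phi_0$ reduces to $2\mathcal D^*\mathcal D$ for the transverse Lichnerowicz operator $\mathcal D\varphi = \bar\nabla^{\T,-}(d^\c_B\varphi)^\sharp$, whose kernel on $C^\infty_{B,0}(M)$ consists exactly of the holomorphy potentials — equivalently, the contact Hamiltonians of transversally holomorphic Killing fields (see \cite{bo-gal-sim}). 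Because $G$ is a \emph{maximal} torus and $\Phi_0$ is $G$-invariant extremal, every such Killing field lies in $\Lie(G)$ (this is precisely the maximality argument already used in the proof of Proposition~\ref{extreme_cond}), so $\ker(\mathcal L_0|_{C^\infty_{B,0}^G}) = \mathfrak g_\eta \ominus \mathbb{R}$. Hence $D_\varphi F(0,0) = (\mathrm{id}-\Pi^G)\circ \mathcal L_0$ is, by transverse ellipticity and self-adjointness of $\mathcal L_0$ (the Laplacians here are basic transversely elliptic, as recalled after Remark~\ref{equal_laplacian}), an isomorphism from $C^\infty_{B,0}(M)^G \ominus \mathfrak g_\eta$ onto itself. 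The semi-Sasakian hypothesis on the whole curve $\Phi_t$ enters to guarantee that the parametrization of Theorem~\ref{space_of_deformation} is available for all small $t$ (so that $\Phi_{t,\varphi}$ genuinely sweeps out $\scrK^G(\xi)$ near $\Phi_t$) and that Proposition~\ref{J-invariance} keeps $d_B\mathbb{G}_B d^\c_B\varphi$ $J_t$-invariant, which is what makes $\eta_\varphi$ compatible with a $K$-contact structure.

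Third, having solved $F(t,\varphi(t))=0$ I would set $\overline\Phi_t := \Phi_{t,\varphi(t)}$: these are $G$-invariant $K$-contact structures with $\bar s^\T_{\overline\Phi_t} - \mathrm{z}^G_\eta \perp \mathfrak g_\eta^\perp$, hence extremal by Proposition~\ref{extreme_cond} (note $\mathrm{z}^G_\eta$ is unchanged since we work at fixed $\eta$ within the contractible $\scrK^G_\eta$), with $\overline\Phi_0 = \Phi_0$; and $\overline\Phi_t$ differs from $\Phi_t$ only by the choice of contact form $\eta_{\varphi(t)}$ versus $\eta_t$ with the same Reeb field, so Gray stability gives a diffeomorphism carrying one to the other, establishing "$\overline\Phi_t$ diffeomorphic to $\Phi_t$." The main obstacle is the surjectivity/isomorphism statement for $D_\varphi F(0,0)$: one must (i) verify that the linearized Hermitian-scalar-curvature operator at the \emph{Sasakian} point $\Phi_0$ really is the transverse Lichnerowicz operator (so that the first line of \eqref{def:laplacians} applies and elliptic theory is in force), (ii) identify its $G$-invariant kernel with $\mathfrak g_\eta$ via the maximal-torus argument, and (iii) handle the functional-analytic setup (choice of Banach completions adapted to the transversally elliptic complex, as in \cite{kac}) so that the implicit function theorem applies and the resulting solution is smooth. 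Points (i)–(ii) are where the structure of the problem is really used; (iii) is technical but routine given the transverse elliptic theory cited earlier.
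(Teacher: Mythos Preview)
Your strategy matches the paper's: parametrize via Theorem~\ref{space_of_deformation}, build a map whose zeros detect extremality through Proposition~\ref{extreme_cond}, linearize at the Sasakian point to obtain the transverse Lichnerowicz-type operator $\mathrm{L}_B^{g_0}(\phi)=\delta_B\delta_B(D^{g_0^\T}d_B\phi)^{J_0,-}$, identify its $G$-invariant kernel with $\mathfrak{g}_\eta$ by maximality of $G$, invoke the inverse/implicit function theorem with bootstrapping, and finish with Gray stability.

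There is, however, one genuine slip. The deformed structures $\Phi_{t,\varphi}$ carry the contact form $\eta_{t,\varphi}\neq\eta$, so they do \emph{not} lie in $\scrK^G_\eta$; your assertion that ``we work at fixed $\eta$ within the contractible $\scrK^G_\eta$'' is wrong, and extremality of $\Phi_{t,\varphi}$ means $\bar s^\T_{t,\varphi}\in\mathfrak{g}_{\eta_{t,\varphi}}$ (contact Hamiltonians for $\eta_{t,\varphi}$, projected with respect to $dv_{\eta_{t,\varphi}}$), not $\bar s^\T_{t,\varphi}\in\mathfrak{g}_\eta$. Thus $F=(\id-\Pi^G)\bar s^\T$ with the fixed projection $\Pi^G=\Pi_\eta$ does not characterise extremality, and the step ``hence extremal by Proposition~\ref{extreme_cond}'' fails as written. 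The paper repairs this by using the composite $(\id-\Pi_\eta)\circ(\id-\Pi_{\eta_{t,\phi}})$: the inner factor detects extremality at the varying contact form, the outer factor lands the map in a fixed Banach space, and one shrinks the neighbourhood so that the composite has the same kernel as $\id-\Pi_{\eta_{t,\phi}}$. A second, more minor, point you pass over: since the parametrization involves the Green operator $\mathbb{G}_t$ of $\Delta_{B,t}$, one must check the map is $C^1$ in $t$; the paper observes that $\dim\ker\Delta_{B,t}$ on $1$-forms equals $\dim H^1(M,\mathbb{R})$ independently of $t$, whence $\mathbb{G}_t$ varies $C^1$ by \cite{kac-gmi}.
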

\begin{proof}
We follow mainly Boyer--Galicki--Simanca proof~\cite{bo-gal-sim}. However, in our case, $J_t=\Phi_t^\T$ may vary.
Let $\mathfrak{g}_\eta=\{\eta(X)\,|\,X\in \Lie(G)\}$ be the space of contact Hamiltonian functions associated to $\Lie(G)$.
Using Theorem~\ref{space_of_deformation}, we consider the deformations of $(\eta,\xi,\Phi_0,g_0)$ defined by
\begin{align*}
\eta_{t,\phi}&=\eta+\mathbb{G}_{t}d^\c_{t}\Delta_{B,t}\phi,\\
\Phi_{t,\phi}&=\Phi_t-\left(\xi\otimes(\eta_{t,\phi}-\eta)\right)\circ\Phi_t,\\
g_{t,\phi}&=d\eta_{t,\phi}\circ\left(\id\otimes\Phi_{t,\phi}\right)+\eta_{t,\phi}\otimes\eta_{t,\phi},
\end{align*}
where $\mathbb{G}_{t}$ is the Green's operator associated to the basic Laplacian $\Delta_{B,t}$, with respect to the $K$-contact metric $(\eta,\xi,\Phi_t,g_t),$
$\phi$ is an element of the space $C^{\infty,\perp}_G$ of smooth $G$-invariant basic functions which are $L^2$-orthogonal (with respect to $dv_\eta$) to $\mathfrak{g}_\eta$.
Here, $d^\c_t$ stands for $\Phi_td_B$. 

Denote by $\Pi_{\eta_{t,\phi}}$ the $L^2$-orthogonal projection of basic functions on
the space $\mathfrak{g}_{\eta_{t,\phi}}=\{\eta_{t,\phi}(X)\,|\,X\in \Lie(G)\}$
with respect to the volume form $dv_{\eta_{t,\phi}}$. Let $\mathcal{W}^{p,k}$ be the Sobolev completion
of $C^{\infty,\perp}_G$ involving derivatives up to order $k$.

Let $\mathcal{U}\subset\mathbb{R}\times\mathcal{W}^{p,k}$ be a neighborhood of $(0,0)$ such 
that $(\eta_{t,\phi},\xi,\Phi_{t,\phi},g_{t,\phi})$ is a $K$-contact structure for any $(t,\phi)\in\mathcal{U}$ and that
$\ker(\id-\Pi_\eta)\circ(\id-\Pi_{\eta_{t,\phi}})=\ker(\id-\Pi_{\eta_{t,\phi}})$
(by possibly shrinking $\mathcal{U}$). Consider then the map (defined by extension)
\begin{eqnarray*}
\Psi\colon \mathcal{U}\subset\mathbb{R}\times\mathcal{W}^{p,k+4}&\longrightarrow&\mathbb{R}\times\mathcal{W}^{p,k}\\
(t,\phi)&\longmapsto&\left(t,(\id-\Pi_\eta)\circ(\id-\Pi_{\eta_{t,\phi}}) \bar{s}^\T_{t,\phi}\right),
\end{eqnarray*}
where $\bar{s}^\T_{t,\phi}$ is the transverse Hermitian scalar curvature of $(\eta_{t,\phi},\xi,\Phi_{t,\phi},g_{t,\phi})$. The
map is well defined for $pk>5$.

By Proposition~\ref{extreme_cond}, $\Psi(t,\phi)=(t,0)$ if and only if $(\eta_{t,\phi},\xi,\Phi_{t,\phi},g_{t,\phi})$ is an extremal $K$-contact structure.
Hence, by hypothesis, $\Psi(0,0)=(0,0)$.

$\Psi$ is a $C^1$ map. Indeed, the dimension of the kernel of the basic Laplacian, with respect to the metric $(\eta,\xi,\Phi_t,g_t)$, applied on $1$-forms,
is equal to the dimension of $H^1(M,\mathbb{R})$ (see~\cite[Proposition 7.2.3]{bo-ga}) and so
the dimension of the kernel of $\Delta_{B,t}$ is independent of $t$. Thus, $\mathbb{G}_{t}$ is a $C^1$ map (see~\cite[Theorem 6.1]{kac-gmi}) and consequently $\Psi$ is.
The linearization of $\Psi$ at $(0,0)$ is given by (see~\cite[Proposition 7.3]{bo-gal-sim})
\begin{equation*}
\left(D\Psi\right)_{(0,0)}(t,\phi)=\left(t,t\left(\star\right)-2\left(\id-\Pi_\eta\right)\mathrm{L}_B^{g_0}(\phi)\right),
\end{equation*}
where $\mathrm{L}_B^{g_0}(\phi)=\delta^{g_0}_B\delta^{g_0}_B\left(D^{g^\T_0}d_B\phi\right)^{J_0,-}$ is a basic self-adjoint transversally
elliptic differential operator of order $4$ (here, $\left(\star\right)$ denotes some expression depending on $\left.\frac{d}{dt}\right|_{t=0}\Phi_t$).
By the standard arguments (see~\cite[Proposition 7.5]{bo-gal-sim} and~\cite[Lemma 4]{apo-cal-gau-ton})
and the main result of~\cite{kac}, $\left(D\Psi\right)_{(0,0)}$ is an isomorphism.

It follows from the Inverse Function Theorem for Banach spaces that there exists a neighborhood
$\mathcal{V}\subset\mathbb{R}\times\mathcal{W}^{p,k+4}$ of $(0,0)$ and  $\epsilon>0$ such that, for $|t|<\epsilon,$
$(\eta_{\Psi|_\mathcal{V}^{-1}(t,0)},\xi,\Phi_{\Psi|_\mathcal{V}^{-1}(t,0)},g_{\Psi|_\mathcal{V}^{-1}(t,0)})$
is an extremal $K$-contact metric.

By a standard bootstrapping argument (used for instance in~\cite{lej-2}), we get then a smooth family of $G$-invariant extremal $K$-contact metrics defined for a sufficiently small $t$.
Theorem~\ref{openess} now follows from {{Gray's Stability Theorem}}.
\end{proof}

\begin{rem}\label{condition_satisfied}
Suppose that at time $t=0$ we have $b_{B}^+=1$. Then, using~\cite[Theorem 6.1]{kac-gmi}, $b_{B}^+=1$ for small $|t|<\epsilon$.
Remark~\ref{upper-continuous} now implies that $\Phi_t$ is automatically semi-Sasakian for small values of $|t|$.
\end{rem}

\let\c=\oldc

{\small

}

\end{document}